\newtheorem{assump}{Assumption}
\newtheorem{assumpmain}{Assumption}
\DeclareMathOperator{\divop}{div}
\newcommand{\ie}{\textit{i.e.}}
\newcommand{\ud}{\,\mathrm{d}}
\newcommand{\RR}{\mathbb{R}}
\newcommand{\ZZ}{\mathbb{Z}}
\newcommand{\TT}{\mathrm{T}}
\newcommand{\LL}{\mathbb{L}}
\newcommand{\Or}{\mathcal{O}}
\newcommand{\bd}[1]{\boldsymbol{#1}}
\newcommand{\wt}[1]{\widetilde{#1}}
\newcommand{\wh}[1]{\widehat{#1}}
\newcommand{\dual}[2]{\langle\,#1,#2\,\rangle}
\DeclareFontFamily{U}{mathx}{\hyphenchar\font45}%
   \DeclareFontShape{U}{mathx}{m}{n}{<->mathx10}{}%
   \DeclareSymbolFont{mathx}{U}{mathx}{m}{n}%
   \DeclareMathAccent{\widebar}{0}{mathx}{"73}%
  \newcommand{\widebar}[1]{\overline{#1}}%
\newcommand{\wb}[1]{\widebar{#1}}
\newcommand{\mc}[1]{\mathcal{#1}}
\newcommand{\veps}{\varepsilon}
\newcommand{\ka}{\kappa}
\newcommand{\abs}[1]{\left\lvert#1\right\rvert}
\newcommand{\norm}[1]{\left\lVert#1\right\rVert}
\newcommand{\Lr}[1]{\left(#1\right)}
\newcommand{\inner}[2]{\left\langle#1,#2\right\rangle}
\newcommand{\ffd}[1]{D_{#1}^+}
\newcommand{\bfd}[1]{D_{#1}^-}
\newcommand{\I}{\imath}
\newcommand{\qc}{\mathrm{hy}}
\newcommand{\at}{\mathrm{at}}
\newcommand{\CB}{\mathrm{CB}}
\newcommand{\nn}{\nonumber}
\title{Stability of a
  force-based hybrid method with planar sharp interface
   \thanks{The work of J.L. was partially supported by the Alfred
   P.~Sloan foundation and the National Science Foundation under
  grant DMS-1312659.  He would like to thank helpful discussions
  with Robert V. Kohn. The work of P.B.M. was supported by National
  Natural Science Foundation of China under grants 91230203, and by the funds from Creative Research Groups of China through grant 11321061, and by the support of CAS National Center for Mathematics and Interdisciplinary Sciences.  }
}
\author{Jianfeng Lu\thanks{Departments of Mathematics, Physics, and
    Chemistry, Duke University, Box 90320, Durham, NC, 27708
    USA. Email: jianfeng@math.duke.edu} \and Pingbing
  Ming\thanks{LSEC, Institute of Computational Mathematics and
    Scientific/Engineering Computing, AMSS, Chinese Academy of
    Sciences, No. 55, Zhong-Guan-Cun East Road, Beijing 100190,
    China. Email: mpb@lsec.cc.ac.cn}}
\begin{document}

\maketitle

\begin{abstract}
  We study a force-based hybrid method that couples atomistic model with
  Cauchy-Born elasticity model with sharp transition interface. We
  identify stability conditions that guarantee the convergence of the
  hybrid scheme to the solution of the atomistic model with second
  order accuracy, as the ratio between lattice parameter and the
  characteristic length scale of the deformation tends to
  zero. Convergence is established for hybrid schemes with planar
  sharp interface for system without defects, with general finite
  range atomistic potential and simple lattice structure. The key
  ingredient of the proof is regularity and stability analysis of
  elliptic systems of difference equations. We apply the results to
  atomistic-to-continuum scheme for a 2D triangular lattice with
  planar interface.
\end{abstract}

\begin{keywords} Multiscale method, atomistic-to-continuum, stability
  analysis, force-based coupling
\end{keywords}

\begin{AMS} 65N12; 74S30
\end{AMS}

\pagestyle{myheadings}
\thispagestyle{plain}
\markboth{Lu and Ming}{Force-based hybrid method with planar sharp interface}

\section{Introduction}

Multiscale methods couple together atomistic and continuum models have
received intense investigations in recent years; see, e.g.,~\cites{TadmorOrtizPhillips:1996, AbBrBe98, E:book,TadmorMiller:book, LuskinOrtner}. Generally speaking, there are two main categories of
methods coupling atomistic and continuum models: energy-based methods and
force-based methods. The energy-based methods employ an energy that is
a mixture of atomistic energy and continuum elastic energy. The energy
functional is then minimized subject to suitable boundary conditions
to obtain the deformed state of the system. The force-based methods
work instead at the level of force balance equations. The forces
derived from atomistic and continuum models are coupled together. The
force balance equations supplemented with suitable boundary conditions
are solved to obtain the deformed state of the system.

From a numerical analysis point of view, the key issue for these
multiscale methods is the consistency and stability analysis of the
coupled schemes~\cite{E:book}*{Chapter 7}. In this paper, we study
force-based atomistic-to-continuum hybrid methods in two and three
dimension with sharp transition between the atomistic and continuum
regions. In our previous work~\cite{LuMing:2011}, we developed the
stability analysis in general dimension for a force-based
atomistic-to-continuum method with smooth transition between the two
regions. The main focus of the current paper is to extend the
stability analysis of~\cite{LuMing:2011} to hybrid schemes with sharp
interface between atomistic and continuum models.

%\textbf{Some history of force based methods here}
Comprehensive reviews for force-based hybrid methods can be found in
\cite{MillerTadmor:2009}*{Section 5 and Section 6}
and~\cite{TadmorMiller:book}*{Section 12.5}. A class of force-based
methods uses a handshake region (transition region) between the
atomistic and continuum regions. A representative of such methods is
the concurrent AtC coupling method (AtC) developed in a series of
papers~\cites{BadiaBochvLehoucqParksFishNuggehallyGunzburger:2007,
  Fish:2007, Badia:2008, Parks:2008}, which blends the continuum stress
and the atomistic forces. The method recently proposed and analyzed by
the authors in~\cite{LuMing:2011} shares certain common traits with
the AtC method. It is proved to be stable and convergent with optimal
convergence rate.  The numerical implementation of the method can be
found in~\cite{YangMingWu:2012}.

As a representative for force-based methods without handshake region,
the FEAt method of Kohlhoff, Gumbsch and
Fischmeister~\cite{KohlhoffGumbschFischmeister:1991} is perhaps one of
the earliest force-based methods. In this method, an elasticity model
is coupled with an atomistic model. The FEAt method does not use a
handshake region: the transition between the atomistic model and the
continuum model is sharp. This kind of coupling is generalized in CADD
method~\cite{ShilkrotMillerCurtin:2002}, which uses the discrete
dislocation model in the continuum region.

One of the main advantages of the force-based methods is that
consistency is achieved with fairly simple construction. Hence the
main focus of analyzing such methods is stability. The stability for
one-dimensional force-based method was already understood in a series
of nice works by Dobson, Luskin, Ortner, and
Shapeev~\cites{DobsonLuskinOrtner:2010a, DobsonLuskinOrtner:2010b,
  DobsonOrtnerShapeev:12}. The generalization to higher dimension is
nontrivial due to the complications of lattice structures and
atomistic interaction potentials. The main idea in our previous
paper~\cite{LuMing:2011} and the current paper is to establish
linearized $H^2$-stability of the hybrid scheme by viewing the scheme
as a nonlinear elliptic finite difference system and applying the
elliptic regularity estimates to such system. The recent
work~\cite{LiLuskinOrtner:12} by Li, Luskin and Ortner proved
linearized $H^1$-stability for methods with smooth coupling under
certain stability conditions. These conditions however were not yet
known how to check explicitly. They also studied how the size of the
transition region affects the stability.

For atomistic-to-continuum hybrid method with sharp interface
studied in this paper, stability might fail at the
interface. To make sure that the hybrid scheme is
convergent, we need to check the stability conditions at the interface
for the coupling schemes. %As our result shows, the interface stability
%conditions are given by the complementing boundary conditions in the
%PDE and finite difference literature.
We shall identify the interface stability conditions as analog of the
complementing boundary conditions for elliptic PDE system.  From a
physical perspective, these stability conditions amount to check
whether there exists nontrivial surface phonon at the interface of the
two schemes. To some extent, these stability conditions are analogous
to the famous Gustafsson-Kreiss-Sundstr\"om stability
conditions~\cite{GKS:72} for finite difference approximations of mixed
initial/boundary value problems.

The main result in this paper is the linearized $H^2$-stability and
convergence of atomistic-to-continuum hybrid method under the
stability conditions. The essential ingredients are regularity and
stability analysis of finite difference schemes. As a consequence of
our main results, we will show that a force-based
atomistic-to-continuum coupling for a triangular lattice with
next-nearest neighbor harmonic interaction is stable and hence
convergent, when the interface between the atomistic and continuum
regions is planar and is parallel to the $(1,\sqrt{3})/2$ direction of the
lattice. Let us finally remark that while we focus on hybrid schemes
coupling atomistic and nonlinear elasticity models, the ideas and
techniques in the current paper can be extended to other force-based
hybrid methods, e.g., the force-based coupling of peridynamics and
nonlinear elasticity proposed in~\cite{SelesonaBeneddinebPrudhomme:2012}.

\subsection{Atomistic model and Cauchy-Born rule}
We consider classical empirical potentials: For atoms
located at $\{ y_1, \cdots, y_K\}$, the interaction potential energy
between the atoms is given by \( V(y_1,\cdots,y_K), \) which often
takes the form
\[
V(y_1,\cdots,y_K)=\sum_{i,j}V_2(y_i/\veps, y_j/\veps)
+\sum_{i,j,k}V_3(y_i/\veps,y_j/\veps,y_k/\veps)+\cdots.
\]
As in~\cite{EMing:2007} and our previous work~\cite{LuMing:2011}, we
make the following assumptions on the potential function $V$: $V$ is
invariant with respect to translations and rigid body motion; $V$ is
smooth in a neighborhood of the equilibrium state; and $V$ has finite
range.  For simplicity of notation and clarity of presentation, our
presentation will be limited to potentials that contain only two-body
and three-body potentials, and we will only make explicit the
three-body terms in the expressions of the potential.  % Our results
% can be extended to potentials with interactions of more atoms that
% satisfy the above assumptions by following similar arguments.
As the
potential function $V$ is a function of atom distances and angles by
invariance with respect to rigid body motion, we may write
\begin{equation*}
  V_3(y_i,y_j,y_k) =V_3\Lr{\abs{y_i-y_j}^2,
    \abs{y_i-y_k}^2,\inner{y_i-y_j}{y_i-y_k}},
\end{equation*}
where $\inner{\cdot}{\cdot}$ denotes the inner product over $\RR^d$.
% We write the two-body and three-body potentials this way to make the
% formula in our calculations easier to read.

We denote $\Omega_{\veps}$ the collections of atom positions in
equilibrium, with $x\in\Omega_{\veps}$ denotes the equilibrium
position of individual atom. Positions of the atoms under deformation
will be viewed as a function defined over $\Omega_{\veps}$, which is
denoted as $y(x) = x + u(x)$. Hence, $u: \Omega_{\veps} \to \RR^d$ is
the displacement of the atoms. We will use the same notations for
lattice functions and theirs norms as in our previous work
\cite{LuMing:2011} (also recalled in the Supplementary Materials
Section~\ref{sec:lattice} for readers' convenience).  Define the space
of the displacement field as
\[
X_{\veps} = \Bigl\{ u: \Omega_{\veps} \to \RR^d \,\Big\vert\,
\sum_{x\in\Omega_{\veps}} u(x) = 0\Bigr\}.
\]
The atomistic problem is formulated as follows. Given force field
$f_{\veps}: \Omega_{\veps} \to \RR^d$, find $u \in X_{\veps}$ such
that
\begin{equation}\label {atom:min}
  u = \arg\min_{u \in X_{\veps}} I_{\at}(u),
\end{equation}
where
\begin{equation*}
  I_{\at}(u) = \dfrac{1}{3!}  \veps^d \sum_{x\in\Omega_{\veps}}\sum_{(s_1,s_2)
    \in S} V_{(s_1, s_2)}[x+u] - \veps^d \sum_{x \in
    \Omega_{\veps}} f_{\veps}(x) u(x),
\end{equation*}
and
\begin{equation*}
  V_{(s_1, s_2)}[y] =
  V\Lr{\abs{\ffd{\veps,s_1}y(x)}^2,
    \abs{\ffd{\veps,s_2}y(x)}^2,\inner{\ffd{\veps,s_1}y(x)}{\ffd{\veps,s_2}y(x)}}.
\end{equation*}
Here $S$ is the set of all possible $(s_1, s_2)$ within the range of
the potential. By our assumptions, $S$ is a finite set. % We only make
%explicit the three-body terms in the potential as mentioned before.
In $I_{\at}$, $\veps^d$ is a normalization factor, so that $I_{\at}$
is actually the energy of the system per atom.

The Euler-Lagrange equation for the atomistic problem is
\begin{equation}\label{atom:eq}
  \mc{F}_{\at}[u](x) = f_{\veps}(x)\qquad x\in\Omega_{\veps},
\end{equation}
where
\begin{align*}
    \mc{F}_{\at}[u](x) &= \sum_{(s_1,s_2)\in S}
    \Bigl( \bfd{\veps,s_1}\Lr{2\partial_1
      V_{(s_1, s_2)}[y](x)\ffd{\veps,s_1}y(x)+\partial_3V_{(s_1, s_2)}[y](x)\ffd{\veps,s_2}y(x)}\\
    &\phantom{\sum_{(s_1,s_2)}}\qquad+\bfd{\veps,s_2}\Lr{2\partial_2 V_{(s_1,
        s_2)}[y](x)\ffd{\veps,s_2}y(x)+\partial_3V_{(s_1,
        s_2)}[y](x)\ffd{\veps,s_1}y(x)}\Bigr),
\end{align*}
where for $i=1,2,3$, we denote
\[
\partial_i V_{(s_1, s_2)}[y](x)=\partial_i
V\Lr{\abs{\ffd{\veps,s_1}y(x)}^2,\abs{\ffd{\veps,s_2}y(x)}^2,
  \inner{\ffd{\veps,s_1}y(x)}{\ffd{\veps,s_2}y(x)}}
\]
the partial derivative with respect to the $i$-th argument of $V$.

To guarantee the solvability of \eqref {atom:eq}, we assume that
$f_{\veps}$ takes the following form:
\[
f_{\veps}(x)\equiv\veps^{-d}\int_{x+\veps\Gamma} f(z) \ud z,\quad
x\in\Omega_\veps,
\]
where $f(x)$ is a function defined on $\Omega$ with zero mean. This
makes sure that $f_{\veps}(x)$ satisfies
\[
\sum_{x\in\Omega_\veps}f_{\veps}(x)=\veps^{-d}\int_{\Omega} f(x) \ud
x=0.
\]

To introduce the Cauchy-Born elasticity problem \cites{BornHuang:1954,
  Ericksen:1984, Ericksen:2008}, we fix more notations. For any
positive integer $k$, we denote by $W^{k,p}(\Omega;\RR^d)$ the Sobolev
space of mappings $u{:}\;\Omega\to\RR^d$ such that $\|u\|_{W^{k,p}}$
is finite, and by $W_{\#}^{k,p}(\Omega;\RR^d)$ the Sobolev space of
periodic functions whose distributional derivatives of order less than
$k$ are in $L^p(\Omega)$. For any $p>d$ and $m\ge 0$, we define $X$
as
\[
X=\Bigl\{u: \Omega \to \RR^d\,\Big\vert\,u\in W^{m+2,p}(\Omega;\RR^d)
\cap\,W_{\#}^{1,p}(\Omega;\RR^d), \, \int_{\Omega}u=0\Bigr\}.
\]
The Cauchy-Born elasticity problem is formulated as follows. Find
$u\in X$ such that
\begin{equation}\label {cb:min}
  u = \arg \min_{u\in X}I(u),
\end{equation}
where the total energy functional $I$ is given by
\[
I(u)=\int_{\Omega} \bigl(W_{\CB}(\nabla u(x))- f(x)u(x)\bigr)\ud x.
\]
Here the Cauchy-Born stored energy density $W_{\CB}$ is given by
\[
W_{\CB}(A)=\dfrac{1}{3!}\sum_{(s_1,s_2)\in S} W_{(s_1, s_2)}(A),
\]
where for $A \in \RR^{d\times d}$,
\[
W_{(s_1, s_2)}(A) = V\Lr{\abs{s_1+s_1
    A}^2,\abs{s_2+s_2A}^2,\inner{s_1+s_1A}{s_2+s_2A}}.
\]
The range $S$ is the same as that in the atomistic potential function.

The Euler-Lagrange equation for the Cauchy-Born elasticity
model is
\begin{equation}\label {cb:eq}
  \mc{F}_{\CB}[u](x)=f(x),
\end{equation}
where $\mc{F}_{\CB}[u]=-\divop\bigl(D_AW_{\CB}(\nabla u)\bigr)$ with
$D_A W_{\CB}(A)$ denoting the derivative of $W_{\CB}(A)$ with respect
to $A$.  Since we are primarily interested in the coupling between the
atomistic and continuum models, we will take the finite element mesh
$\mathcal{T}_{\veps}$ as a triangulation of $\Omega_{\veps}$ with each
atom site as an element vertex.
% with element size$\veps$.
The triangulation is translational invariant. The approximation space
$\wt{X}_{\veps}$ is defined as
\[
\wt{X}_{\veps}=\bigl\{ u \in W^{1,p}_{\#}(\Omega;\RR^d) \mid
u|_T\in P_1(T), \ \forall\, T\in\mc{T}_{\veps}\bigr\},
\]
where $P_1(T)$ is the space of linear functions on the element $T$.
We denote by $\mc{F}_{\veps}$ the force from finite element approximation
of Cauchy-Born elasticity problem~\eqref{cb:min}.
\subsection{Formulation of force-based hybrid method with sharp
  interface}\label{sec:formulation}
To formulate the force-based hybrid method, we take a
continuum region
\begin{equation*}
  \Omega_c = \Bigl\{ \sum_{j=1}^d x_ja_j \,\Big\vert\,0 \leq x_1 < 1/2,\, 0
  \leq x_j < 1,\, j = 2, \cdots, d \Bigr\},
\end{equation*}
and denote $\varrho$ the characteristic function associated with
$\Omega_c$: $\varrho(x) = 1$ if $ x \in \Omega_c$. $\Omega_a = \Omega
\backslash \Omega_c = \{ x \mid \varrho(x) = 0\}$ is the atomistic
region. The continuum region and atomistic region are separated by
two hyperplanes $\{x_1 = 0\}$ and $\{x_1 = 1/2\}$ as a result of
periodic boundary condition. The simple geometry here is chosen for
simplicity of presentation. Using localization techniques as in~\cite{LaxNirenberg:1966},
we may generalize the analysis to any $\Omega_c$ with smooth boundary.

We consider a force field defined by
\begin{equation}\label{eq:sharpforce}
  \mc{F}_{\qc}[u](x)\equiv \bigl(1 - \varrho(x)\bigr) \mc{F}_{\at}[u](x)
  + \varrho(x) \mc{F}_{\veps}[u](x), \qquad
  x\in\Omega_{\veps}.
\end{equation}
Due to the choice of $\varrho$, in the atomistic region $\Omega_{a}$,
the force acting on the atom is just that of atomistic model, while in
the continuum region $\Omega_c$, the force is calculated from finite
element approximation of the Cauchy-Born elasticity. Since
$\varrho$ is taken to be the characteristic function, we consider here
a hybrid method with sharp interface, i.e., there is no transition or
buffer region between the atomistic and continuum regions.

Given a loading $f_{\veps}$, we find $u \in X_{\veps}$ such that
\begin{equation}\label{eq:sharp}
  (\Pi_{\veps} \mc{F}_{\qc}[u])(x) = f_{\veps}(x)\qquad x \in \Omega_{\veps},
\end{equation}
where for a lattice function $g$, $\Pi_{\veps}$ projects $g$ to a
function with zero mean.
\[
  (\Pi_{\veps} g)(x) {:}= g(x) - \veps^d \sum_{x' \in \Omega_{\veps}} g(x').
\]

As in~\cite{LuMing:2011}, the convergence of the hybrid
scheme is tightly connected with its linear stability.
Thus, it is natural to study the linearized operator of
$\mc{F}_{\qc}$.  Denote $\mc{H}_{\qc}[u]$ the linearization of
$\mc{F}_{\qc}$ at state $u$: $\mc{H}_{\qc}[u] =\delta
  \mc{F}_{\qc}/\delta u$, so that $\mc{H}_{\qc}[u]$ is a linear
operator acting on a lattice functions $w$, which is given by
\[
\mc{H}_{\qc}[u] w = \lim_{t\to 0} \dfrac{\partial \mc{F}_{\qc}}{\partial t}[u + t w].
\]
We will rewrite the operator $\mc{H}_{\qc}$ in the form of a
difference operator as
\[
  \mc{H}_{\qc}[u] = \sum_{\mu \in \mc{A}} h_{\qc}[u](x, \mu) T^{\mu},
\]
where the coefficient $h_{\qc}[u](x,\mu)$ is a $d$ by $d$ matrix (probably
asymmetric) for each $x\in\Omega_{\veps}$ and $\mu \in \mc{A}$, which is given by
\begin{equation}\label{eq:defhqc}
  (h_{\qc}[u])_{\alpha\beta}(x, \mu) = \frac{\partial
    ( \mc{F}_{\qc}[u] )_{\alpha}(x)}
  {\partial (T^{\mu} u)_{\beta}(x)},\quad\alpha, \beta = 1, \cdots, d.
\end{equation}
Here $\mc{A}$ is the
stencil of the difference operator, which is finite by assumptions on the atomistic potential. By the definition of $\mc{F}_{\qc}$, we have
\begin{equation}\label{eq:linearh}
  h_{\qc}[u](x, \mu) = \bigl(1 - \varrho(x)\bigr) h_{\at}[u](x, \mu)
  + \varrho(x) h_{\veps}[u](x, \mu),
\end{equation}
where $h_{\at}[u]$ and $h_{\veps}[u]$ are given by similar equations
as \eqref{eq:defhqc} with $\mc{F}_{\qc}$ replaced by $\mc{F}_{\at}$
and $\mc{F}_{\veps}$, respectively.

Define $h_{\qc}[u](x, \xi)$ as the symbol of the pseudo-difference
operator $\mc{H}_{\qc}[u]$, which is given by
\[
h_{\qc}[u](x, \xi) = \sum_{\mu\in\mc{A}} h_{\qc}[u](x, \mu)
  \exp\Bigl(\I \veps \sum_j \mu_j a_j \cdot \xi\Bigr) \qquad \text{for } \xi \in
  \LL^{\ast}_{\veps},
\]
and similarly for $h_{\veps}[u]$ and $h_{\at}[u]$.  By
definition, we have for any $x \in \Omega_{\veps}$,
\[
  (\mc{H}_{\qc}[u] e_k e^{\I x\cdot \xi})_j(x) =
  (h_{\qc}[u])_{jk}(x, \xi) e^{\I x\cdot \xi},
\]
for $j,k =1,\dots,d$ and similarly for $h_{\veps}[u]$ and
$h_{\at}[u]$.  It is also clear that~\eqref{eq:linearh} implies
\begin{equation}\label{eq:linearwth}
  h_{\qc}[u](x, \xi) = (1 - \varrho(x)) h_{\at}[u](x, \xi)
  + \varrho(x) h_{\veps}[u](x, \xi).
\end{equation}

When $\mc{F}_{\qc}$ is linearized around the equilibrium state $u = 0$,
we will simplify the notation as $\mc{H}_{\qc} = \mc{H}_{\qc}[0]$,
$h_{\qc} = h_{\qc}[0]$, and similarly for those defined for atomistic
model and finite element discretization of the Cauchy-Born
elasticity. By the translation invariance of the total energy $I_\at$
at the state $u = 0$, we observe that the coefficients of the symbols
$h_{\at}(x, \mu)$ and $h_{\veps}(x, \mu)$ are independent of the
position $x$, i.e.,
\[
h_{\at}(x, \mu) = h_{\at}(\mu), \quad h_{\veps}(x, \mu) =
h_{\veps}(\mu).
\]

We also denote $\mc{H}_{\CB}$ as the linearization of $\mc{F}_{\CB}$
at the equilibrium state $u = 0$, and define $h_{\CB}(x, \xi)$ as its
symbol. Due to the periodic boundary condition, the argument $\xi$ in the symbol $h_{\CB}(x, \xi)$ only takes value in $\LL^{\ast}$. Again, due to
the translation invariance of the total energy, the symbol
$h_{\CB}$ is also independent of $x$.

%As shown in \cite{LuMing:2011},
An elementary calculation shows that the matrices $h_{\at}(\xi)$,
$h_{\veps}(\xi)$ and hence $h_{\qc}(x, \xi)$ are Hermitian for any
$\veps > 0$. As in~\cite{LuMing:2011}, we make the following stability
assumption on the atomistic potential:
\begin{assumpmain}\label{assump:stabatom}
  The matrix $h_{\at}(\xi)$ is positive definite and there exists
  a positive constant $a_{\at}$ such that for any $\veps>0$ and any $\xi\in
  \LL_{\veps}^{\ast}$,
  \begin{equation*}
    \det {h}_{\at}(\xi) \geq a_{\at} \Lambda_{0, \veps}^{2d}(\xi).
  \end{equation*}
\end{assumpmain}
%
% We also recall from~\cite[Lemma 3.3]{LuMing:2011} that, as a consequence of
% Assumption~\ref{assump:stabatom}, for sufficiently small $\veps$, the
% finite element approximation is also linearly stable.
% %
% \begin{lemma}\label{lem:stabCB}
%   There exist constants $a, \veps_0>0$ that for any $\veps \in (0,
%   \veps_0)$, the symbol ${h}_{\veps}(\xi)$ is positive definite and for any $\xi
%   \in \LL_{\veps}^{\ast}$,
%   \begin{equation*}
%     \det {h}_{\veps}(\xi) \geq a \Lambda_{0, \veps}^{2d}(\xi).
%   \end{equation*}
% \end{lemma}
%
\subsection{Stability conditions at the interface}
The main focus of the current paper is to establish convergence for
the hybrid method with sharp interface as $\veps \to 0$. In
\cite{LuMing:2011}, convergence was proved for any short-range
interaction potentials when $\varrho$ is a smooth function, or in
other words, when the transition region between atomistic and
continuum regions is of $\Or(\veps^{-1})$. In this paper, as the
transition is sharp, we require additional stability conditions to
guarantee convergence.

To understand better where the additional stability assumptions come
from, let us reformulate the hybrid scheme as a system of difference
equations with boundary conditions by folding with respect to the
interface (similar folding trick was used in \cite{Ciment:1971}). This
is one of the keys to establish the stability for the hybrid
scheme. Let us consider domain
\begin{equation*}
  \Omega_{\mathrm{strip}}
  = \Bigl\{ x a_1 + \sum_j y_j a_{j+1} \mid x \in \RR,\, y \in [0,
  1)^{d-1} \Bigr\}
\end{equation*}
with periodic boundary condition in $y$
variable. $\Omega_{\mathrm{strip}}$ is discretized by grid points
$(x_{\nu}, y_{\mu})$ with $x_{\nu} = \veps \nu a_1$ with
$\nu \in \ZZ$ and
$y_{\mu} = \veps \sum_j \mu_j a_{j+1}$ with $\mu \in \ZZ^{d-1}$. We
consider the following hybrid system on $\Omega_{\mathrm{strip}}$:
\begin{align}\label{eq:Lcorig}
  & \sum_{j=1}^d (\mc{H}_{\veps, ij} u_j)(x_{\nu}, y_{\mu}) =
  f_{i}(x_{\nu}, y_{\mu}), && \nu < 0, \ i = 1, \cdots, d; \\
  \label{eq:Laorig}
  & \sum_{j=1}^d (\mc{H}_{\at, ij} u_j)(x_{\nu}, y_{\mu}) =
  f_{i}(x_{\nu}, y_{\mu}), && \nu \geq 0, \ i = 1, \cdots, d.
\end{align}
As we will see later, the stability analysis of the coupled system is the key to understand the stability of the hybrid scheme with sharp interface.

Let $(\underline{\nu}^c_{ij}, \overline{\nu}^c_{ij})$ and
$(\underline{\nu}^a_{ij}, \overline{\nu}^a_{ij})$ be the extent of the
stencils of $\mc{H}_{\veps, ij}$ and $\mc{H}_{\at, ij}$ in $x$
direction respectively.  To simplify the presentation, we will assume
that the extents are the same for all $i, j$'s, which is usually the
case for applications in atomistic-continuum hybrid schemes. We denote
them as $(\underline{\nu}^c, \overline{\nu}^c)$ and
$(\underline{\nu}^a,\overline{\nu}^a)$. The construction can be
extended to more general cases, which will be omitted for simplicity.
Without loss of generality, we also assume that $\underline{\nu}^a
\leq \underline{\nu}^c \leq 0 \leq \overline{\nu}^c \leq
\overline{\nu}^a$.

To do the folding, we rename the variables as
\begin{align*}
  & U_i(x_{\nu}, y_{\mu}) = u_i(x_{\overline{\nu}^c - \nu - 1},
  y_{\mu}), & \nu \geq 0, \mu \in \RR^{d-1}; \\
  & U_{d+i}(x_{\nu}, y_{\mu}) = u_i(x_{\underline{\nu}^a+\nu},
  y_{\mu}), & \nu \geq 0, \mu \in \RR^{d-1}.
\end{align*}
This leads to $d (\overline{\nu}^c - \underline{\nu}^a)$ compatibility
conditions
\begin{equation}\label{eq:compat}
  U_i(x_{\nu}, y_{\mu}) = U_{d+i}(x_{\overline{\nu}^c - \underline{\nu}^a-\nu-1}, y_{\mu}),
  \quad
  0 \leq \nu \leq \overline{\nu}^c - \underline{\nu}^a - 1,\, i = 1, \cdots, d.
\end{equation}
We will rewrite equations \eqref{eq:Lcorig}-\eqref{eq:Laorig} in terms
of $U$'s. Define
\begin{equation*}
  L_{\veps, ij} =
  \begin{cases}
    \displaystyle \sum_{0 \leq \nu \leq \overline{\nu}^c -
      \underline{\nu}^c, \mu} h_{\veps, ij}(\overline{\nu}^c-\nu,\,
    \mu) T_{\veps, x}^{\nu} T_{\veps, y}^{\mu}, & i, j = 1, \cdots, d; \\
    \displaystyle \sum_{\underline{\nu}^a \leq \nu \leq
      \overline{\nu}^a,\, \mu} h_{\at, (i-d)(j-d)}(\nu, \mu) T_{\veps,
      x}^{\nu} T_{\veps, y}^{\mu}, & i, j = d+1, \cdots, 2d; \\
    0, & \text{otherwise.}
  \end{cases}
\end{equation*}
By construction, the extents of the operators $L_{\veps}$ are given by
\begin{equation*}
  (\underline{\nu}_{ij}, \overline{\nu}_{ij}) =
  \begin{cases}
    (0, \overline{\nu}^c -
    \underline{\nu}^c), & i,j = 1, \cdots, d; \\
    (0, \overline{\nu}^a - \underline{\nu}^a), &
    i,j = d+1, \cdots, 2d; \\
    (0, 0), & \text{otherwise.}
  \end{cases}
\end{equation*}
Let
\begin{equation*}
  F_i(x_{\nu}, y_{\mu}) =
  \begin{cases}
    f_i(x_{-\nu-1}, y_{\mu}), & i = 1, \cdots, d; \\
    f_{i-d}(x_{\nu}, y_{\mu}), & i = d+1, \cdots, 2d,
  \end{cases}
\end{equation*}
we then have
\begin{equation}\label{eq:L}
  \sum_{j=1}^{2d} (L_{\veps, ij} U_j)(x_{\nu}, y_{\mu}) = F_i(x_{\nu}, y_{\mu}),
  \qquad \nu \geq 0, \mu \in \ZZ^{d-1}.
\end{equation}
We further define for $k = 1, \cdots, d(\overline{\nu}^c -
\underline{\nu}^a)$ and $j = 1, \cdots, d$,
\begin{equation*}
  B_{\veps, kj} = (D_{\veps, e_1}^+)^i I \delta_{jl}, \qquad B_{\veps, k(j+d)} =
  - (D_{\veps, -e_1}^+)^i T_{\veps, x}^{\overline{\nu}^c - \underline{\nu}^a - 1} \delta_{jl},
\end{equation*}
where $i = \lfloor (k-1) / d \rfloor$ and $l = [(k-1) \mod d] + 1$.
The compatibility conditions~\eqref{eq:compat} are then equivalent to
the boundary conditions
\begin{equation}\label{eq:B}
  \sum_{j=1}^{2d} (B_{\veps, kj} U_j)(x_0, y_{\mu}) = 0, \qquad \mu \in \ZZ^{d-1},
  k = 1, \cdots, d(\overline{\nu}^c - \underline{\nu}^a).
\end{equation}
Therefore, we have reformulated \eqref{eq:Lcorig}-\eqref{eq:Laorig}
into a difference system \eqref{eq:L} with boundary conditions
\eqref{eq:B}.

We now state the additional stability conditions, which can be understood as conditions to make sure that the solutions to the finite difference system is regular up to the boundary.
Taking $\alpha^+ = 0$, $\beta_j^+ = \overline{\nu}^c -
\underline{\nu}^c$, $j = 1, \cdots, d$, and $\beta_j^+ =
\overline{\nu}^a - \underline{\nu}^a$, $j = d + 1, \cdots, 2d$, we
have then $0 \leq \underline{\nu}_{ij} \leq \overline{\nu}_{ij} \leq
\alpha_i^+ + \beta_j^+$. The number of boundary conditions we have
imposed on~\eqref{eq:B} is $q = d (\overline{\nu}^c - \underline{\nu}^a)$.
\begin{assumpmain}\label{assump:boundarycond}
  The number of boundary conditions $d(\overline{\nu}^c -
  \underline{\nu}^a)$ is equal to the total number of roots of
  \begin{equation*}
    \begin{aligned}
      & R^c(z, \eta) = \det \left[ h_{\veps, ij}(-i \veps^{-1} \log z,
        \veps^{-1} \eta) \right]  = 0; \\
      & R^a(z, \eta) = \det \left[ h_{\at, ij}(-i \veps^{-1} \log z,
        \veps^{-1} \eta) \right] = 0.
     \end{aligned}
  \end{equation*}
  satisfying
  $0 < \abs{z(\eta)} < 1$ for $\abs{\eta} \neq 0$.
\end{assumpmain}

% We remark that this assumption is satisfied for linearized
% finite difference operators given by common inter-atomic potentials.

The stability of the hybrid scheme relies on the stability at
interface, which is characterized by the complementing boundary
condition.
\begin{assumpmain}\label{assump:complement}
  The hybrid system \eqref{eq:Lcorig}-\eqref{eq:Laorig}, or
  equivalently \eqref{eq:L}-\eqref{eq:B} satisfies the
  \emph{Complementing Boundary Condition}, defined below in
  Section~\ref{sec:complement}.
\end{assumpmain}

The complementing boundary conditions are natural analog of
corresponding complementing boundary conditions for continuous
elliptic system, as in \cite{AgmonDouglisNirenberg:1959}, which will
be explained in details in Section~\ref{sec:complement}.  This
stability assumption needs to be checked for particular atomic
interaction potentials. Examples can be found in
Section~\ref{sec:example}.
\subsection{Main result}
The main result of this paper is
\begin{theorem}[Convergence]\label{thm:main}
  Under Assumptions~\ref{assump:stabatom}, \ref{assump:boundarycond},
  and \ref{assump:complement}, there exist positive constants $\delta$
  and $M$, so that for any $p > d$ and $ f \in W^{15, p}(\Omega) \cap
  W^{1, p}_{\#}(\Omega) $ with $\norm{f}_{W^{15, p}} \leq \delta$, we
  have $\norm{y_{\qc} - y_{\at}}_{\veps,2} \leq M \veps^2$.
\end{theorem}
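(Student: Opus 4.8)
The plan is to prove convergence by the standard consistency-plus-stability paradigm, realized through a quantitative inverse function (Newton--Kantorovich) argument centered at the atomistic solution. Write $y_{\at}=x+u_{\at}$ for the solution of the atomistic problem~\eqref{atom:eq} and seek the hybrid solution in the form $y_{\qc}=x+u_{\at}+w$; since $y_{\qc}-y_{\at}=w$, it suffices to produce a solution $w$ of $\Pi_{\veps}\mc{F}_{\qc}[u_{\at}+w]=f_{\veps}$ with $\norm{w}_{\veps,2}\le M\veps^2$. Define $G(w)=\Pi_{\veps}\mc{F}_{\qc}[u_{\at}+w]-f_{\veps}$; I will estimate the residual $G(0)$ (consistency), invert the linearization $DG(0)=\Pi_{\veps}\mc{H}_{\qc}[u_{\at}]$ with an $\veps$-uniform bound (stability), and control the second variation of $\mc{F}_{\qc}$ to close a contraction.

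For the consistency estimate, I first record that for $f\in W^{15,p}$ with $\norm{f}_{W^{15,p}}\le\delta$ small the Cauchy--Born problem~\eqref{cb:eq} has a smooth solution $u_{\CB}$ with uniformly bounded high-order derivatives, and that the atomistic solution $u_{\at}$ exists and is $\Or(\veps^2)$-close to the interpolant of $u_{\CB}$ with $\veps$-uniform bounds on its discrete derivatives (the Cauchy--Born approximation theory of~\cite{EMing:2007}, as used in~\cite{LuMing:2011}). Because $\mc{F}_{\qc}[u]-\mc{F}_{\at}[u]=\varrho\,(\mc{F}_{\veps}[u]-\mc{F}_{\at}[u])$, the residual is $G(0)=\Pi_{\veps}\varrho\,(\mc{F}_{\veps}[u_{\at}]-\mc{F}_{\at}[u_{\at}])$. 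On the smooth field $u_{\at}$ both the $P_1$ finite-element Cauchy--Born force $\mc{F}_{\veps}$ and the atomistic force $\mc{F}_{\at}$ are second-order consistent with $\mc{F}_{\CB}$, so their difference, and hence $G(0)$, is $\Or(\veps^2)$ in the order-zero discrete norm $\norm{\cdot}_{\veps,0}$; the large Sobolev index $15$ simply absorbs the derivatives consumed by these Taylor expansions together with those needed for the regularity estimate below.

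The heart of the proof is the $\veps$-uniform invertibility of $DG(0)$, i.e.\ the linearized $H^2$-stability estimate $\norm{w}_{\veps,2}\le C\,\norm{\Pi_{\veps}\mc{H}_{\qc}[u_{\at}]w}_{\veps,0}$. I would first establish this at the equilibrium state, for the constant-coefficient operator $\mc{H}_{\qc}=\mc{H}_{\qc}[0]$. Using the folding construction of Section~\ref{sec:formulation} I recast the coupled interface problem~\eqref{eq:Lcorig}--\eqref{eq:Laorig} as the difference boundary-value problem~\eqref{eq:L}--\eqref{eq:B}, whose interior ellipticity is furnished by Assumption~\ref{assump:stabatom}, whose boundary operator has the correct count of roots inside the unit disk by Assumption~\ref{assump:boundarycond}, and whose boundary data are non-degenerate by the Complementing Boundary Condition, Assumption~\ref{assump:complement}. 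I then invoke the discrete, $\veps$-uniform analog of the Agmon--Douglis--Nirenberg a priori estimates up to the interface~\cite{AgmonDouglisNirenberg:1959} --- the regularity theory for elliptic difference systems developed in the body of the paper --- to obtain the $H^2$ bound for $\mc{H}_{\qc}[0]$. Finally I transfer the estimate to $\mc{H}_{\qc}[u_{\at}]$ by a perturbation argument: since $u_{\at}$ is smooth and $\Or(\delta)$-small, the variable coefficients $h_{\qc}[u_{\at}](x,\mu)$ differ from $h_{\qc}(\mu)$ by a small, slowly-varying amount, so a localization and freezing-coefficient argument (as in~\cite{LaxNirenberg:1966}) preserves the estimate after shrinking $\delta$.

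With consistency and stability in hand I close the argument by the quantitative inverse function theorem. The Sobolev embedding $W^{2,p}\hookrightarrow W^{1,\infty}$ valid for $p>d$ makes the discrete $\norm{\cdot}_{\veps,2}$ norm control the $L^\infty$ norm of first differences, so the smoothness and finite range of $V$ render $\mc{F}_{\qc}$ a twice-differentiable map whose second variation is bounded on a fixed $\norm{\cdot}_{\veps,2}$-ball; hence $DG$ is Lipschitz near $0$ with an $\veps$-uniform constant. Combining $\norm{G(0)}_{\veps,0}=\Or(\veps^2)$, $\norm{DG(0)^{-1}}\le C$, and this Lipschitz bound, Newton--Kantorovich yields a unique small $w$ with $\norm{w}_{\veps,2}\le 2C\,\norm{G(0)}_{\veps,0}\le M\veps^2$, which is the claim. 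I expect the decisive difficulty to be the $\veps$-uniform discrete elliptic regularity up to the interface under the Complementing Boundary Condition; the rest of the argument is, modulo careful bookkeeping of the discrete norms, routine.
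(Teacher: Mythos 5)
Your overall architecture --- consistency of the residual at $y_{\at}$, linearized $H^2$-stability via the folding construction and discrete elliptic regularity up to the interface, and a quantitative inverse function theorem to close the nonlinear argument --- is the same strategy the paper follows (it defers the consistency and the Newton-type step to \cite{LuMing:2011} and concentrates on the linear stability). However, there is one genuine gap in your treatment of the stability step. The discrete Agmon--Douglis--Nirenberg theory you invoke (Theorem~\ref{thm:regboundary}, applied in Theorem~\ref{thm:regularity}) delivers only the \emph{a priori regularity estimate}
\begin{equation*}
  \norm{v}_{\veps,2} \lesssim \norm{\mc{H}_{\qc} v}_{\veps,0} + \norm{v}_{\veps,0},
\end{equation*}
with the lower-order term $\norm{v}_{\veps,0}$ on the right-hand side; this is what the Complementing Boundary Condition is equivalent to. It does \emph{not} by itself give the invertibility bound $\norm{v}_{\veps,2}\lesssim\norm{\mc{H}_{\qc}v}_{\veps,0}$ that you need for $\norm{DG(0)^{-1}}\le C$. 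Removing the zeroth-order term is a separate and nontrivial step, because the usual compactness argument does not apply directly to a family of operators living on different lattices $\Omega_{\veps_k}$ as $\veps_k\to 0$.

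The paper's Section~\ref{sec:stability} is devoted precisely to this: one argues by contradiction with a normalized sequence $v_k$, transfers it to the continuum via the trigonometric interpolation $Q_{\veps}$ and a low-pass filter $K_{\veps}$, uses the consistency of the symbols $h_{\qc}(x,\xi)$ with the Cauchy--Born symbol $h_{\CB}(\xi)$ (Lemma~\ref{lem:Hcons} and Proposition~\ref{prop:contcons}) to show $\norm{\mc{H}_{\CB}\wb{v}_k}_{L^2}\to 0$, and then invokes the invertibility of $\mc{H}_{\CB}$ on mean-zero functions to conclude $\norm{v_k}_{\veps_k,0}\to 0$, which contradicts the regularity estimate. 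Your proposal never brings in the continuum operator $\mc{H}_{\CB}$ or any mechanism for uniform-in-$\veps$ injectivity, so as written the stability estimate --- which you yourself identify as the decisive difficulty --- is not actually established. The remaining ingredients of your proposal (the form of the residual $G(0)=\Pi_{\veps}\varrho\,(\mc{F}_{\veps}[u_{\at}]-\mc{F}_{\at}[u_{\at}])$, the perturbation from $\mc{H}_{\qc}[0]$ to $\mc{H}_{\qc}[u_{\at}]$ for small $\delta$, and the Newton--Kantorovich closure) are consistent with how the paper, following \cite{LuMing:2011}, completes the proof.
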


\smallskip

\textit{Remark.}  The Assumption~\ref{assump:stabatom} is a natural
stability condition for the atomistic lattice system. Compared with
the convergence result in \cite{LuMing:2011}, the additional
assumptions \ref{assump:boundarycond} and \ref{assump:complement} come
from the coupling between the atomistic potential and the finite
element discretization at the interface. % The two assumptions will be
% given in Section~\ref{sec:regularity} after we review the regularity
% theory for elliptic finite difference system with boundary conditions.

% \textit{Remark.}
%   While we do not attempt in this work to optimize the regularity
%   assumption on $f$, we note that it is easy to relax the assumption
%   to $f \in W^{5, p}(\Omega)$ with $p>d$.

% \smallskip

The proof of Theorem~\ref{thm:main} follows a similar strategy as
in~\cite{LuMing:2011}. Actually, once we obtain the stability estimate
Theorem~\ref{thm:stability}, the proof of Theorem~\ref{thm:main} is
essentially the same as that of \cite{LuMing:2011}*{Theorem 1.1}. The
consistency analysis of the scheme follows from that
of~\cite{LuMing:2011}*{Section 2} with some immediate
adaptations. Observe in particular that the proof of consistency does
not depend on the smoothness of $\varrho$. Hence, we will focus on the
linear stability analysis, and omit the consistency
part.%repeating parts.

%Following a similar strategy in~ \cite{LuMing:2011}, we will establish
%regularity estimates in Section~\ref{sec:regularity} by using the
%framework of pseudo-difference operators~\cites{LaxNirenberg:1966,
%Thomee:1964, BubeStrikwerda:1983, StrikwerdaWadeBube:1990}, which we
%shall recall in Section~\ref{sec:swb}. The stability estimate then follows
%from the regularity estimate combined with consistency, which is
%presented in Section~\ref{sec:stability}.  In
%Section~\ref{sec:example}, we present some examples of convergent and
%non-convergent hybrid schemes.
The remaining of the paper is organized as follows. In
Section~\ref{sec:swb}, we recall the regularity theory for elliptic
difference system with boundary conditions and use it to establish the
regularity estimates for the hybrid scheme.  The stability estimate then follows from the
regularity estimate combined with consistency, which is presented in
Section~\ref{sec:stability}. In Section~\ref{sec:example}, we apply
the general theory to two examples of force-based atomistic-to-continuum
hybrid method for $2D$ triangular lattice with next-nearest neighbor
harmonic interaction and truncated Lennard-Jones potential. We make some conclusive remarks in
Section~\ref{sec:conclusion}.
\section{Regularity estimate of hybrid schemes}\label{sec:swb}

To analyze the general hybrid scheme, we will take the viewpoint of
our previous work~\cite{LuMing:2011} and regard the scheme as a
nonlinear finite difference scheme. One of the main ingredients we
will use in this paper is the regularity estimates up to the boundary
for elliptic difference systems established
in~\cite{StrikwerdaWadeBube:1990}. We also note that the regularity
estimates of elliptic difference equations and systems have been
investigated by several works~\cites{LaxNirenberg:1966,
  ThomeeWestergren:1968, Hackbusch:1981, Hackbusch:1983,
  BubeStrikwerda:1983, StrikwerdaWadeBube:1990, Martin:94}.  For
reader's convenience, we recall here briefly  the setup and results with some
adaptation to the current work.

\subsection{Difference operators}
We write a difference operator $L_{\veps}$ in the form of
\begin{equation}\label{eq:transop}
  (L_{\veps} u)(x) = \sum_{\mu \in \mc{A}} l_{\veps}(x, \mu) (T^{\mu}_{\veps} u)(x),
\end{equation}
where the stencil $\mc{A} \subset \ZZ^d$ is finite. We define the
symbol of $L_{\veps}$ as\footnote{To be consistent
  with~\cite{LuMing:2011}, our scaling of the reciprocal space in
  terms of $\veps$ is slightly different from that
  of~\cite{StrikwerdaWadeBube:1990}.}
\begin{equation}\label{eq:defsymbol}
  l_{\veps}(x, \zeta) = \sum_{\mu \in \mc{A}} l_{\veps}(x, \mu)
  e^{\I 2\pi \veps \mu \cdot \zeta}
\end{equation}
for any $\zeta \in \ZZ^d$. By definition, we have
\begin{equation*}
  \Bigl(L_{\veps} e^{\I x \cdot (\zeta_j b_j)}\Bigr)(x) = l_{\veps}(x, \zeta) e^{\I x\cdot(\zeta_j b_j)}.
\end{equation*}
We choose $\mc{A}$ as the minimal stencil, that is, for $\mu \in \mc{A}$,
the symbol $l_{\veps}(x, \mu)$ is not identically zero. For this choice of
$\mc{A}$, we let
\begin{equation*}
  \underline{\mu}_i = \min_{\mu \in \mc{A}} \mu_i,
  \quad \overline{\mu}_i = \max_{\mu \in \mc{A}} \mu_i,
  \qquad i = 1, \cdots, d,
\end{equation*}
and call $(\underline{\mu}_i, \overline{\mu}_i)$ the \emph{extent} of
the difference operator in $x_i$ direction.
\subsection{Elliptic difference system}
Consider a half-space
\begin{equation*}
  \Omega_{\mathrm{half}}
  = \Bigl\{ x a_1 + \sum_j y_j a_{j+1} \mid x \geq 0, y \in [0,1)^{d-1} \Bigr\}
\end{equation*}
with periodic boundary condition in $y$. $\Omega_{\mathrm{half}}$ is
discretized by grid points $(x_{\nu}, y_{\mu})$ with mesh size $\veps
= 1/ (2N)$. Here $x_{\nu} = \veps \nu a_1$ with $\nu \geq 0$ and $y_{\mu} =
\veps \sum_j \mu_j a_{j+1}$ with $\mu \in \ZZ^{d-1}$.
Consider a linear system of difference equations with boundary conditions
\begin{align}\label{eq:Lsys}
  & \sum_{j=1}^n (L_{\veps, ij} u_j)(x_{\nu}, y_{\mu}) = f_i(x_{\nu},
  y_{\mu}),\qquad i = 1, \cdots, n \\
  & \label{eq:Bsys} \sum_{j=1}^n (B_{\veps, kj} u_j)(x_0, y_{\mu}) =
  0,\qquad k = 1, \cdots, q.
\end{align}
We denote $l_{\veps, ij}$ the symbol of $L_{\veps, ij}$ and $b_{\veps,
  kj}$ the symbol of $B_{\veps, kj}$ viewed as difference operators on
the whole space, defined as in \eqref{eq:defsymbol}. For our purpose,
it suffices to consider the case that the coefficients of difference
operators are independent of $x$, $y$, and $\veps$, \ie,
\begin{equation*}
  l_{\veps, ij}(x, y, \nu, \mu) = l_{ij}(\nu, \mu),
  \qquad b_{\veps, kj}(y, \nu, \mu) = b_{kj}(\nu, \mu).
\end{equation*}
We will henceforth make this simplification. Note that the symbols
$l_{\veps, ij}$ and $b_{\veps, kj}$ however still depend on $\veps$.
\begin{definition}[Regular elliptic difference system]
  We call~\eqref{eq:Lsys} a regular elliptic of order $(\sigma, \tau)$
  for $\sigma, \tau \in \ZZ^n$ if the following conditions are satisfied.
  \begin{enumerate}[i)]
  \item For each $i, j = 1, \cdots, n$ and $\veps$ sufficiently small,
    $\abs{l_{\veps, ij}(\zeta)} \lesssim
      \Lambda_{\veps}(\zeta)^{\sigma_i + \tau_j}$;
  \item There exist positive constants $\zeta_0$ and $\veps_0$ such
    that $\abs{\det l_{\veps}(\zeta)} \gtrsim
    \Lambda_{\veps}(\zeta)^{2p}$ for all $0 < \veps \leq \veps_0$ and
    $\abs{\zeta} \geq \zeta_0$, where $2p = \sum_i (\sigma_i +
    \tau_i)$.
  \end{enumerate}
\end{definition}
As a convention, we will choose $\sigma$ and $\tau$ so that $\max
\sigma_i = 0$. We also take $\rho \in \ZZ^q$ so that $\abs{b_{\veps,
    kj}(\zeta)} \lesssim \Lambda_{\veps}(\zeta)^{\rho_k + \tau_j}$,
with $\rho_k$ is the smallest possible integer such that the estimate
hold. $(\rho, \tau)$ gives the order of $B_{\veps}$ viewing as
difference operators on the whole space. We will assume that the
operators $L_{\veps, ij}$ and $B_{\veps, kj}$ only contain differences
of order $\sigma_i + \tau_j$ and $\rho_k + \tau_j$, respectively.
\begin{assump}\label{assump:boundary}
  Let the difference operators $L_{\veps, ij}$ have extent
  $(\underline{\nu}_{ij}, \overline{\nu}_{ij})$ in $x-$direction, we
  assume that there are $\alpha^+, \beta^+$ in $\ZZ^d$ such that
  \begin{equation*}
    0 \leq \underline{\nu}_{ij} \leq \overline{\nu}_{ij}
    \leq \alpha_i^+ + \beta_j^+, \qquad
    i, j = 1, \cdots, n,
  \end{equation*}
  and such that the number of roots $z(\eta)$, counting multiplicity,
  of the equation $R(z, \eta) = 0$ is $\sum_i (\alpha_i^+ +
  \beta_i^+)$. Here
  %\begin{equation*}
%    \begin{aligned}
%      R(z, \eta) & = \det \biggl[ \sum_{\nu \in \ZZ}
%        \sum_{\mu\in\ZZ^{d-1}} l_{ij}(\nu, \mu) z^{\nu} e^{i \mu\cdot\eta}\biggr] \\
%      & = \det \bigl[ l_{\veps, ij}(-i \veps^{-1} \log z, \veps^{-1}
%        \eta) \bigr].
%    \end{aligned}
%  \end{equation*}
  \[
  R(z, \eta) = \det \biggl[ \sum_{\nu \in \ZZ}
        \sum_{\mu\in\ZZ^{d-1}} l_{ij}(\nu, \mu) z^{\nu} e^{\imath\mu\cdot\eta}\biggr]
        = \det \bigl[ l_{\veps, ij}(-\imath \veps^{-1} \log z, \veps^{-1}\eta) \bigr].
  \]
  Furthermore, the number of boundary conditions $q$ is equal to the
  number of roots of $R(z, \eta) = 0$ that satisfy
  $0 < \abs{z(\eta)} < 1$ for $\abs{\eta} \neq 0$.
\end{assump}

When $q$ is larger than $p$, an additional
assumption is needed. We assume that the boundary conditions are ordered so that
$\rho_k$ are in increasing order. We define
\begin{equation*}
  \wb{\rho} = \max_{1\leq k \leq p}(\rho_k + 1, 0),
  \quad \text{and} \quad  \rho^{\ast} =
 \begin{cases} \displaystyle
   \min_{k>p} \rho_k + 1, & \text{if } q > p\,; \\
   \infty, & \text{if }q = p\,.
 \end{cases}
\end{equation*}
\begin{assump}\label{assump:boundweight}
  The boundary operators satisfy $\rho_k \geq \wb{\rho}$ for
  $p<k\leq q$.
\end{assump}

In the limit $\veps \to 0$, we obtain the system of differential equations associated with the system of difference equations.
\begin{align*}
  & \sum_{j=1}^n (L_{ij}(\partial_x, \partial_y) u_j)(x, y) =
  f_i(x, y) &&  i = 1, \cdots, n,\\
  & \sum_{j=1}^n (B_{kj}(\partial_x, \partial_y) u_j)(0, y) = 0 && k =
  1, \cdots, p.
\end{align*}
The symbol of $L_{ij}$ is given by $l_{ij}(\zeta) = \lim_{\veps \to 0} l_{\veps, ij}(\zeta)$.
\begin{assump}[Supplementary condition]
  \label{assump:supple}
  The determinant $\det [l_{ij}(\zeta)]$ is of even degree $2p$. For every pair of
  vectors $\zeta, \zeta' \in \RR^d$, the polynomial $\det
  [l_{ij}(\zeta + \tau \zeta')]$ in the complex variable $\tau$ has
  exactly $p$ roots with positive imaginary part.
\end{assump}

\subsection{Complementing boundary condition}\label{sec:complement}

Due to periodicity in $y$, we may reduce the system to a one
dimensional operator by Fourier transform. Let $G$ be the tangential
grid given by $G = \bigl\{ y_{\mu} \mid 0 \leq \mu_j < 2N, j = 1,
\cdots, d -1 \bigr\}$.  The discrete Fourier transform for lattice
functions $u$ with respect to $y$ is then
\begin{equation*}
  \wh{u}(x_{\nu}, \eta) = \Bigl(\frac{\veps}{2\pi}\Bigr)^{d-1}
  \sum_{y_{\mu} \in G} e^{-\I 2\pi \veps \eta \cdot \mu} u(x_{\nu}, y_{\mu}),
\end{equation*}
for $\eta \in \ZZ^{d-1}$ with $\abs{\eta_j} \leq N$ for $j = 1,
\cdots, d-1$.  % Recall that
% \begin{equation*}
%   L_{\veps, ij} = \sum_{\nu\geq 0, \mu \in \ZZ^{d-1}}
%   l_{ij}(\nu, \mu) T^{\nu}_{\veps, x} T^{\mu}_{\veps, y}.
% \end{equation*}
The reduced operator is given by
\begin{equation*}
  \wt{L}_{\veps, ij}(\eta)
  = \sum_{\nu \in \ZZ} \wt{l}_{\veps, ij}(\nu, \eta) T^{\nu}_{\veps},
\end{equation*}
where the coefficients $\wt{l}_{\veps, ij}$ for $\eta \in \ZZ^{d-1}$
are given by $\wt{l}_{\veps, ij}(\nu, \eta) = \sum_{\mu} l_{ij}(\nu,
\mu) e^{\I 2\pi \veps \mu\cdot\eta}$.  Analogously, we define
$\wt{B}_{\veps, kj}$ as the reduced operator of $B_{\veps, kj}$. We
denote $\wt{B}^1_{\veps}$ the operators corresponding to the first $p$
boundary conditions and $\wt{B}^2_{\veps}$ for the remaining $q-p$
ones.  Similarly, we also denote the reduced operators for $L_{ij}$ as
\begin{equation*}
  \wt{L}_{ij}(\partial_x, \theta) = L_{ij}(\partial_x, \imath\theta),
  \qquad \text{for } \theta \in S^d,
\end{equation*}
and analogously for $B_{kj}$. For these reduced operators, we consider three types of eigensolutions,
which are defined as follows.
\begin{definition} We call an eigensolution of type I a nontrivial solution of
  \begin{equation*}
    (\wt{L}_{\veps}(\eta) w)(x_{\nu}) = 0\qquad \text{for some }\eta \neq 0,
  \end{equation*}
  satisfying (a) $(\wt{B}^1_{\veps}(\eta) w)(x_0) = 0$; (b)
  $(\wt{B}^2_{\veps}(\eta) w)(x_0) = 0$; (c) $w(x_{\nu}) \to 0 \
  \text{as}\ \nu \to \infty$.
\end{definition}
\begin{definition} We call an eigensolution of type II a nontrivial solution of
  \begin{equation*}
    (\wt{L}(\partial_x, \theta) w)(x) = 0\qquad \text{for some }
    \theta \in S^{d},
  \end{equation*}
  satisfying (a) $(\wt{B}^1(\partial_x, \theta) w)(x_0) = 0$; (b)
  $w(x) \to 0 \ \text{as}\ x \to \infty$.
\end{definition}
\begin{definition}\label{def:eigen3} We call an eigensolution of type III a
  nontrivial solution to
  \begin{equation*}
    (\wt{L}_{\veps}(0)w)(x) = 0,
  \end{equation*}
  satisfying (a) $(\wt{B}^2_{\veps}(0) w)(x_0) = 0$;
  (b) $w(x_{\nu}) \to 0 \ \text{as}\ \nu \to
    \infty$.
\end{definition}

\textit{Remark.}
  We remark that type I and type III eigensolutions are connected to
  the notion of surface phonon in the physics literature. The
  complementing conditions give a mathematical characterization of the
  surface phonon.
  % This connection will be further explored in the
  % upcoming work \cite{LuMing:SCB}.
  On the other hand, the type II
  eigensolutions are surface waves at the boundary of the PDE.
\begin{definition}[\textsc{Complementing Boundary Condition}] The system
  \eqref{eq:Lsys} with boundary conditions \eqref{eq:Bsys} satisfies
  the \emph{Complementing Boundary Condition} if there are no eigensolutions of
  type I, II, or III.
\end{definition}

We state the following regularity estimate of the solution of elliptic
system with boundary conditions, which is an adaptation of
\cite{StrikwerdaWadeBube:1990}*{Theorem 3.1} to our setting.
\begin{theorem}\label{thm:regboundary}
  If $u$ is a solution to the system \eqref{eq:Lsys} with boundary
  conditions \eqref{eq:Bsys} and Assumptions \ref{assump:boundary},
  \ref{assump:boundweight}, and \ref{assump:supple} are satisfied,
  then the following regularity estimate holds for each $s$ with
  $\wb{\rho} \leq s < \rho^{\ast}$ and $\veps$ sufficiently small, if
  and only if, the Complementing Boundary Condition holds.
  \begin{equation*}
    \norm{u}_{\veps, \tau + s}^2
    \lesssim  \norm{f}_{\veps, s - \sigma}^2 + \norm{u}_{\veps, 0}^2,
  \end{equation*}
  % where $t = \wb{\rho} + \tfrac{1}{2} \lfloor 2(s - \wb{\rho})
  % \rfloor$, and the norms at the boundary are defined as
  % \begin{equation*}
  %   \abs{u_j}_{\veps, s}^2 =  \sum_{\eta \in \LL^{\ast}_{y, \veps}}
  %   \sum_{m=0}^{\lfloor s\rfloor} \abs{\Lambda_{\veps}(\eta)^{s-m}
  %     D_{x, \veps}^m \wh{u}_j(x_0, \eta)}^2,
  % \end{equation*}
  % with $\abs{u}_{\veps, s + \tau}^2 = \sum_{j=1}^d \abs{u_j}_{\veps,
  %   s+\tau_j}^2$.
\end{theorem}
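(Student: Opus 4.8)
The plan is to follow the structure of \cite{StrikwerdaWadeBube:1990}*{Theorem 3.1}, adapted to the $\veps$-scaled norms used here. Since the coefficients $l_{ij}$ and $b_{kj}$ are independent of $x$ and $y$, the first step is to apply the tangential discrete Fourier transform in $y$ introduced above, which diagonalizes the system in the dual variable $\eta$ and reduces \eqref{eq:Lsys}--\eqref{eq:Bsys} to a family, parametrized by $\eta$, of one-dimensional half-line difference boundary value problems for $\wh{u}(\cdot, \eta)$ governed by the reduced operators $\wt{L}_{\veps}(\eta)$ and $\wt{B}_{\veps}(\eta)$. By Parseval's identity the asserted estimate is equivalent to the analogous estimate for each reduced problem, holding \emph{uniformly} in $\eta$ and in $\veps$. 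I would then split the bound into an interior contribution and a boundary-layer contribution.

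The interior contribution uses only the regular ellipticity. The lower bound $\abs{\det l_{\veps}(\zeta)} \gtrsim \Lambda_{\veps}(\zeta)^{2p}$ for $\abs{\zeta} \ge \zeta_0$ yields a discrete G\r{a}rding-type inequality and hence $\norm{u}_{\veps, \tau+s} \lesssim \norm{f}_{\veps, s-\sigma} + \norm{u}_{\veps, 0}$ for the free-space operator, with no reference to the boundary. What remains is the part of $\wh{u}(\cdot, \eta)$ built from the modes of $\wt{L}_{\veps}(\eta)$ that decay as $\nu \to \infty$; this boundary layer is exactly where the Complementing Boundary Condition enters. For the necessity direction I would argue by contrapositive: if the Complementing Boundary Condition fails there is a nontrivial eigensolution of type I, II, or III, and each type, transferred to the appropriate frequency regime and rescaled, produces a sequence of exact or asymptotic solutions of the homogeneous reduced problem for which the left-hand side of the estimate stays bounded below while the right-hand side tends to zero, contradicting any $\veps$-independent constant. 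The type distinctions record which boundary operators are active in each regime: both $\wt{B}^1_{\veps}$ and $\wt{B}^2_{\veps}$ for type I at fixed $\eta \neq 0$, only the surviving operator $\wt{B}^1$ in the continuum limit for type II, and only $\wt{B}^2_{\veps}$ at the degenerate frequency $\eta = 0$ for type III.

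The sufficiency direction is the main obstacle. Assuming the Complementing Boundary Condition, I must establish uniform-in-$(\eta, \veps)$ invertibility of the boundary map. By Assumption~\ref{assump:boundary} the space of solutions of $\wt{L}_{\veps}(\eta) w = 0$ decaying as $\nu \to \infty$ has dimension equal to the number $q$ of boundary conditions (the roots of $R(z,\eta) = 0$ inside the unit disk), so the boundary operator $(\wt{B}^1_{\veps}, \wt{B}^2_{\veps})$ restricted to this decaying subspace is a square $q \times q$ matrix; absence of eigensolutions says precisely that it is injective, hence invertible, for each fixed $\veps$ and $\eta \neq 0$. The delicate point is to bound the norm of its inverse uniformly across all frequency regimes at once. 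I would partition the range of $\eta$ into three zones matched to the three eigensolution types: the zone $\veps\abs{\eta}$ bounded below, handled on the compact torus of values of $\veps\eta$ by continuity of the symbols (where type I is the only possible obstruction); the continuum-scaling zone $\veps\abs{\eta} \to 0$ with $\eta \neq 0$, where a rescaling shows $\wt{L}_{\veps}(\eta)$ converges to the PDE reduced operator $\wt{L}(\partial_x, \theta)$ on $S^d$ and type II controls the bound; and the case $\eta = 0$, governed by type III. Here Assumption~\ref{assump:supple} guarantees the continuum limit genuinely has $p$ decaying modes so that the type II problem is well posed, while Assumption~\ref{assump:boundweight} keeps the extra $q - p$ conditions carried by $\wt{B}^2$ from spoiling the weighting in the admissible range $\wb{\rho} \le s < \rho^\ast$. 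The hard part will be gluing the three compactness estimates across the transition zones while tracking the $\veps$-dependence of the reduced symbols; once this uniform reduced estimate is in hand, summation over $\eta$ via Parseval returns the stated global bound.
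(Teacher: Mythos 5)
The paper does not prove this theorem at all: it is imported, with notational adaptation, from \cite{StrikwerdaWadeBube:1990}*{Theorem 3.1}, so there is no internal proof to compare against. Measured against the proof in that reference, your outline reproduces its architecture correctly: tangential Fourier reduction to a family of half-line problems in $\nu$, an interior estimate from regular ellipticity, necessity by constructing violating sequences from eigensolutions, and sufficiency via uniform invertibility of the $q\times q$ boundary map on the decaying subspace, organized into the three frequency regimes that the three eigensolution types are designed to control.

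As a proof, however, the proposal has a genuine gap, and you name it yourself: the uniform-in-$(\eta,\veps)$ bound on the inverse of the boundary map across the transitions between regimes is the entire analytic content of the theorem, and you only announce it. The danger is concentrated exactly where the regimes meet. As $\eta\to 0$ some roots $z(\eta)$ of $R(z,\eta)=0$ approach the unit circle (this is visible in the paper's Example 1, where $\abs{z_1},\abs{z_2}\to 1$ as $\zeta\to 1$), so the decaying subspace degenerates, roots may coalesce, and the ``compactness plus continuity of symbols'' argument you invoke for the first zone breaks down there; one must show that nonexistence of type II and type III eigensolutions supplies precisely the limiting invertibility needed to keep the constant bounded through this degeneration, which requires a careful matched analysis of the root structure (or an equivalent Kreiss-symmetrizer-type construction), not merely the assertion that the three estimates glue. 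Two smaller imprecisions: for a system, the lower bound on $\det l_{\veps}$ does not directly yield a G\r{a}rding inequality; the interior estimate comes from inverting the symbol via cofactors and bounding the entries of $l_{\veps}^{-1}$ using the order conventions $(\sigma,\tau)$. And your accounting of which boundary operators act in each regime should also explain where Assumption~\ref{assump:boundweight} and the restriction $\wb{\rho}\le s<\rho^{\ast}$ actually enter the estimate, namely in controlling the trace terms generated by the extra $q-p$ conditions, rather than citing them only as hypotheses that ``keep things from spoiling.'' If your intent is to cite \cite{StrikwerdaWadeBube:1990} for the theorem, as the paper does, the outline is a fair summary; if the intent is to prove it, the sufficiency direction is not yet done.
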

\subsection{Regularity estimate for hybrid schemes}\label{sec:regularity}

We will prove the regularity estimate for the the linearized operator $\mc{H}_{\qc} =
\mc{H}_{\qc}[0]$ of the hybrid scheme by
applying the regularity estimate Theorem~\ref{thm:regboundary}. % , which we recall
% here for reader's convenience.
% \begin{align*}
%   & \sum_{j=1}^{2d} (L_{\veps, ij} U_j)(x_{\nu}, y_{\mu}) = F_i(x_{\nu}, y_{\mu}),
%   \qquad \nu \geq 0, \mu \in \ZZ^{d-1}. \\
%   & \sum_{j=1}^{2d} (B_{\veps, kj} U_j)(x_0, y_{\mu}) = 0, \qquad \mu \in \ZZ^{d-1},
%   k = 1, \cdots, d(\overline{\nu}^c - \underline{\nu}^a).
% \end{align*}

\begin{theorem}[Regularity]\label{thm:regularity}
  Under Assumptions~\ref{assump:stabatom}, \ref{assump:boundarycond},
  and \ref{assump:complement}, for any $v \in H^2_{\veps}(\Omega)$, we
  have for $\veps$ sufficiently small
  \begin{equation}\label{eq:regularity}
    \norm{v}_{\veps, 2} \lesssim \norm{\mc{H}_{\qc} v}_{\veps, 0} + \norm{v}_{\veps, 0}.
  \end{equation}
\end{theorem}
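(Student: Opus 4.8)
The plan is to reduce the regularity estimate for the hybrid operator $\mc{H}_{\qc}$ to the abstract regularity estimate Theorem~\ref{thm:regboundary} via the folding construction developed in Section~\ref{sec:formulation}. The starting observation is that $\mc{H}_{\qc}$ acts on the strip $\Omega_{\mathrm{strip}}$ exactly as the coupled system \eqref{eq:Lcorig}--\eqref{eq:Laorig}: on the continuum side ($\nu<0$) it equals $\mc{H}_{\veps}$ and on the atomistic side ($\nu\geq 0$) it equals $\mc{H}_{\at}$, by the definition \eqref{eq:linearh} of the hybrid symbol and the fact that $\varrho$ is the sharp characteristic function. Thus, up to the periodic extension in the tangential directions, estimating $\norm{v}_{\veps,2}$ in terms of $\norm{\mc{H}_{\qc} v}_{\veps,0}$ is equivalent to an up-to-the-boundary regularity estimate for the folded system \eqref{eq:L} with boundary conditions \eqref{eq:B}.

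First I would verify that the folded operators $L_{\veps,ij}$ and $B_{\veps,kj}$ fit the hypotheses of Theorem~\ref{thm:regboundary}. Concretely, I would record the order pair $(\sigma,\tau)$ of the block-diagonal operator $L_{\veps}$: the atomistic block $h_{\at}$ is a second-order difference operator, while the finite-element block $h_{\veps}$ is likewise second order, so one takes $\sigma_i=0$ and $\tau_j=2$ for all components (after the normalization $\max\sigma_i=0$). Ellipticity in the sense of condition (ii) of the regular-elliptic definition follows for the atomistic block directly from Assumption~\ref{assump:stabatom} (the determinant lower bound $\det h_{\at}(\xi)\gtrsim \Lambda_{0,\veps}^{2d}(\xi)$), and for the Cauchy--Born finite-element block from the standard coercivity of the elasticity operator; since $L_{\veps}$ is block diagonal, $\det L_{\veps}=\det h_{\veps}\cdot\det h_{\at}$ and the two bounds combine. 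Assumption~\ref{assump:boundary} on the number of roots in the unit disk is exactly the content of Assumption~\ref{assump:boundarycond}, and the correct count $q=d(\overline{\nu}^c-\underline{\nu}^a)$ of boundary conditions was verified in Section~\ref{sec:formulation}. The supplementary condition (Assumption~\ref{assump:supple}) follows from ellipticity of the limiting Cauchy--Born/atomistic symbols. With these in place, the Complementing Boundary Condition for \eqref{eq:L}--\eqref{eq:B} is precisely Assumption~\ref{assump:complement}, so Theorem~\ref{thm:regboundary} applies and yields
\[
  \norm{U}_{\veps,\tau+s}^2 \lesssim \norm{F}_{\veps,s-\sigma}^2 + \norm{U}_{\veps,0}^2
\]
for an admissible $s$; choosing $s$ so that $\tau+s$ corresponds to the $H^2$ norm gives the desired control of $\norm{U}_{\veps,2}$ by $\norm{F}_{\veps,0}=\norm{\mc{H}_{\qc}v}_{\veps,0}$ and $\norm{U}_{\veps,0}$.

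The final step is to unfold: I would translate the estimate on $U=(U_i,U_{d+i})$ back to $v$ using the renaming relations and the fact that the compatibility conditions \eqref{eq:compat}, equivalently \eqref{eq:B}, hold identically for any $v$ arising from a single-valued lattice function on the strip. Because folding merely relabels grid points near the interface (with a bounded overlap of width $\overline{\nu}^c-\underline{\nu}^a$), the folded norms $\norm{U}_{\veps,k}$ are equivalent to the original norms $\norm{v}_{\veps,k}$ up to constants independent of $\veps$, so the strip estimate transfers directly to \eqref{eq:regularity}. The main obstacle I anticipate is not any single inequality but the bookkeeping needed to confirm that the orders $(\sigma,\tau,\rho)$ assigned to the heterogeneous block system are mutually consistent and that the boundary operators $B_{\veps}$ genuinely satisfy the weight hypothesis of Assumption~\ref{assump:boundweight} (i.e.\ $\rho_k\geq\wb{\rho}$ for $p<k\leq q$); since $q=d(\overline{\nu}^c-\underline{\nu}^a)$ typically exceeds $p$, the ordering of the compatibility conditions by differencing order must be arranged so that the higher-order constraints carry the larger weights. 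Verifying this compatibility, together with the passage from the infinite strip to the periodic domain $\Omega$ (localizing near the two interfaces $\{x_1=0\}$ and $\{x_1=1/2\}$ and away from them where $\mc{H}_{\qc}$ is purely atomistic or purely continuum and interior elliptic regularity applies), is where the bulk of the careful work lies.
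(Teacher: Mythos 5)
Your proposal follows essentially the same route as the paper's proof: localize to the strip, fold \eqref{eq:Lcorig}--\eqref{eq:Laorig} into \eqref{eq:L}--\eqref{eq:B}, verify the hypotheses of Theorem~\ref{thm:regboundary} (with $\sigma=0$, $\tau=2$, ellipticity of both blocks from Assumption~\ref{assump:stabatom}, Assumption~\ref{assump:boundary} being exactly Assumption~\ref{assump:boundarycond}, and the complementing condition being Assumption~\ref{assump:complement}), then unfold using the norm equivalences for $U$, $F$ versus $u$, $f$. The two points you leave as ``careful bookkeeping'' are settled in the paper by a short explicit computation --- $\rho_k=\lfloor(k-1)/d\rfloor-2$, giving $\wb{\rho}=0$ and $\rho^{\ast}=1$ so that Assumption~\ref{assump:boundweight} holds and $s=0$ is the admissible choice --- and by invoking the result of Agmon--Douglis--Nirenberg that the supplementary condition is automatic here, rather than deducing it from ellipticity alone.
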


\begin{proof}
  By a standard localization argument as in \cite{LaxNirenberg:1966},
  it suffices to show that for a solution $u$ of the system
  \eqref{eq:Lcorig}-\eqref{eq:Laorig}, the following regularity
  estimate holds for $\veps$ sufficiently small
  \begin{equation}\label{eq:reg}
    \norm{u}_{\veps, 2} \lesssim \norm{f}_{\veps, 0} + \norm{u}_{\veps, 0}.
  \end{equation}
  Recall that after folding with respect to the interface, the system
  \eqref{eq:Lcorig}-\eqref{eq:Laorig} is equivalent to the system
  \eqref{eq:L} with boundary conditions \eqref{eq:B}. Let us now
  verify that the system \eqref{eq:L}-\eqref{eq:B} satisfies the
  assumptions of Theorem~\ref{thm:regboundary}. First by construction
  of the operator $L$, we know that
  \begin{equation}\label{eq:productsymbol}
    \det l_{\veps, ij}(\zeta)=\det h_{\veps, ij}(\zeta)\det h_{\at, ij}(\zeta).
  \end{equation}
  The regular ellipticity of the operator $L_{\veps}$ is guaranteed by
  Assumption~\ref{assump:stabatom}, as $\mc{H}_{\at}$ and
  $\mc{H}_{\veps}$ are both regular elliptic by \cite[Lemma
  3.3]{LuMing:2011}.  Moreover, the order of $L_{\veps}$ is $(2, 0)$,
  and hence, $p = 2d$.  From \eqref{eq:productsymbol}, the
  Assumption~\ref{assump:boundary} for the system
  \eqref{eq:L}-\eqref{eq:B} is equivalent to
  Assumption~\ref{assump:boundarycond}.  By definition, we have
  \begin{equation*}
    \rho_k = \lfloor (k-1) / d \rfloor - 2.
  \end{equation*}
  Hence, $\overline{\rho} = 0$ and $\rho^{\ast} = 1$. Assumption
  \ref{assump:boundweight} is also satisfied by the system
  \eqref{eq:L}-\eqref{eq:B}.  The Supplementary Assumption
  \ref{assump:supple} is automatically satisfied since $2d \geq 3$ as
  a result in \cite{AgmonDouglisNirenberg:1959}.

  Applying Theorem \ref{thm:regboundary}, we have $\norm{U}_{\veps, 2}
  \lesssim \norm{F}_{\veps, 0} + \norm{U}_{\veps, 0}$.  By definition
  of $F$ and $U$, it is clear that $\norm{F}_{\veps, 0} =
  \norm{f}_{\veps, 0}$, $\norm{U}_{\veps, 0} \leq 2 \norm{u}_{\veps,
    0}$, and $\norm{u}_{\veps, 2} \leq \norm{U}_{\veps,
    2}$. Therefore, we arrive at the desired estimate \eqref{eq:reg}.
\end{proof}

%   Let $\varphi_1$ and $\varphi_2$ be a partition of unity of the
%   domain $\Omega$, such that $\varphi_1, \varphi_2 \in
%   C^{\infty}(\Omega)$, $\varphi_1 + \varphi_2 = 1$, $\varphi_1(x) = 1$
%   if $x_1 \in [3/8, 5/8)$, and $\varphi_1(x) = 0$ if $x_1 \in [0, 1/8)
%   \cup [7/8, 1)$. We then have
%   \begin{equation*}
%     \norm{v}_{2, \veps} \leq \norm{\varphi_1 v}_{2, \veps} +
%     \norm{\varphi_2 v}_{2, \veps}.
%   \end{equation*}
%   The estimates for the two terms on the right hand side are the same,
%   we only consider $\norm{\varphi_1 v}_{2, \veps}$ in the following.
%   Let $u = \varphi_1 v$, since $\varphi_1$ is compactly supported in
%   $\Omega$, we can extend $u$ by $0$ as a function on the grid points
%   in $\Omega_{\mathrm{strip}}$. Note that
%   \[
%   \norm{\mc{H}_{\qc} u}_{0, \veps} = \norm{\mc{H}_{\qc} (\varphi_1 v)}
%   \lesssim \norm{\mc{H}_{\qc} v}_{2, \veps} + \norm{v}_{0, \veps},
%   \]
%   since $\varphi_1$ is a smooth function. Thus, by
%   Proposition~\ref{prop:reg}, we have
%   \begin{equation*}
%     \norm{\varphi_1 v}_{2, \veps} \lesssim \norm{\mc{H}_{\qc} u}_{0, \veps} +
%     \norm{u}_{0, \veps} \lesssim \norm{\mc{H}_{\qc} v}_{2, \veps}
%     + \norm{v}_{0, \veps}.
%   \end{equation*}
%   Therefore, we obtain the regularity estimate \eqref{eq:regularity}.
% \end{proof}
%
\section{Stability}\label{sec:stability}

The main purpose of this section is to establish the following
stability estimate.
\begin{theorem}[Stability]\label{thm:stability}
  Under Assumptions~\ref{assump:stabatom}, \ref{assump:boundarycond},
  and \ref{assump:complement}, for any $v \in H^2_{\veps}(\Omega)$
  with mean zero, we have
  \begin{equation}\label{eq:stability}
    \norm{v}_{\veps,2} \lesssim \norm{\mc{H}_{\qc} v}_{\veps,0}
  \end{equation}
  for $\veps$ sufficiently small.
\end{theorem}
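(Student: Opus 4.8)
The plan is to remove the lower-order term $\norm{v}_{\veps,0}$ from the regularity estimate of Theorem~\ref{thm:regularity},
\[
\norm{v}_{\veps,2}\lesssim\norm{\mc{H}_{\qc}v}_{\veps,0}+\norm{v}_{\veps,0},
\]
thereby obtaining \eqref{eq:stability}. Since the force-based operator $\mc{H}_{\qc}$ is generally nonsymmetric, a direct energy/G\aa rding argument is unavailable; instead I would argue by contradiction, combining the regularity estimate with a discrete-to-continuum compactness argument and the stability of the limiting Cauchy--Born operator $\mc{H}_{\CB}$.

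Suppose \eqref{eq:stability} fails with a uniform constant as $\veps\to 0$. Then there exist $\veps_n\to 0$ and mean-zero $v_n\in H^2_{\veps_n}(\Omega)$ with $\norm{v_n}_{\veps_n,2}=1$ and $\norm{\mc{H}_{\qc}v_n}_{\veps_n,0}\to 0$. Inserting this into Theorem~\ref{thm:regularity} gives $\norm{v_n}_{\veps_n,0}\gtrsim 1$ for all large $n$, so the discrete $L^2$-mass of $v_n$ stays bounded away from zero. Using an interpolation operator $\mc{I}_{\veps}$ compatible with the discrete norms, the bound $\norm{v_n}_{\veps_n,2}=1$ provides interpolants $\mc{I}_{\veps_n}v_n$ bounded in $H^2(\Omega)$; by the compact Rellich embedding $H^2(\Omega)\hookrightarrow H^1(\Omega)$ a subsequence converges strongly in $H^1(\Omega)$, and hence in $L^2(\Omega)$, to a periodic mean-zero limit $v\in H^2(\Omega)$. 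Standard interpolation estimates give $\norm{v_n}_{\veps_n,0}\to\norm{v}_{L^2}$, so $\norm{v}_{L^2}\gtrsim 1$ and in particular $v\neq 0$.

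Next I would identify the equation satisfied by $v$. Let $\Gamma=\{x_1=0\}\cup\{x_1=1/2\}$ denote the interface, which has measure zero, and fix any periodic test function $\phi\in C^\infty_c(\Omega\setminus\Gamma)$. On the support of $\phi$ the coefficient $\varrho$ is constant, so $\mc{H}_{\qc}$ reduces to a single difference operator ($\mc{H}_{\at}$ or $\mc{H}_{\veps}$), and discrete summation by parts gives $\langle\mc{H}_{\qc}v_n,\phi\rangle=\langle v_n,\mc{H}^{\ast}\phi\rangle$ with $\mc{H}^{\ast}$ the corresponding adjoint difference operator acting on the smooth $\phi$. By the consistency of these difference operators with Cauchy--Born elasticity away from the interface (the consistency estimates of~\cite{LuMing:2011}*{Section 2}), $\mc{H}^{\ast}\phi\to\mc{H}_{\CB}\phi$ in $L^2$, while $v_n\to v$ in $L^2$, whence $\langle\mc{H}_{\qc}v_n,\phi\rangle\to\langle\mc{H}_{\CB}v,\phi\rangle$. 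Since also $\mc{H}_{\qc}v_n\to 0$, we get $\langle\mc{H}_{\CB}v,\phi\rangle=0$. As such $\phi$ are dense in $L^2(\Omega)$ ($\Gamma$ being null) and $\mc{H}_{\CB}v\in L^2(\Omega)$, this yields $\mc{H}_{\CB}v=0$.

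Finally I would invoke stability of the continuum operator: Assumption~\ref{assump:stabatom} passes to the limit as strong ellipticity of the Cauchy--Born symbol, namely $h_{\CB}(\xi)$ is positive definite for every $\xi\in\LL^{\ast}\setminus\{0\}$, so by Fourier analysis on the torus $\mc{H}_{\CB}$ is injective on mean-zero periodic functions. Hence $\mc{H}_{\CB}v=0$ together with $\int_\Omega v=0$ forces $v=0$, contradicting $v\neq 0$ and establishing \eqref{eq:stability}. The main obstacle is the passage to the continuum limit: one must arrange the interpolation and discrete compactness so that the uniform $H^2$ bound survives, and, crucially, test only against functions supported away from $\Gamma$ so that the jump of $\varrho$ across the interface---where the scaled second differences of a discontinuous object would blow up like $\veps^{-2}$---never enters the limit. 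The zero-mean constraint is essential throughout, as it removes the constant null mode $\xi=0$ of $\mc{H}_{\CB}$.
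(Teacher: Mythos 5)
Your proposal is correct in outline, but the way you eliminate the lower-order term is genuinely different from the paper's. Both arguments share the same skeleton: argue by contradiction with a normalized sequence ($\norm{v_k}_{\veps_k,2}=1$, mean zero, $\norm{\mc{H}_{\qc}v_k}_{\veps_k,0}\to 0$), and both ultimately rest on Theorem~\ref{thm:regularity} together with the invertibility of $\mc{H}_{\CB}$ on mean-zero functions. The paper, however, never extracts a limit function: it mollifies the trigonometric interpolant with a low-pass filter $K_{\veps}$ and proves the quantitative operator-level consistency $\norm{\mc{H}_{\CB}\wb{v}_k-\wb{\mc{H}_{\qc}v_k}}_{L^2}\to 0$ (Proposition~\ref{prop:contcons}) via the symbol estimates of Lemma~\ref{lem:Hcons}; these hold at every $x$, interface included, because $h_{\at}(\xi)-h_{\qc}(x,\xi)=\varrho(x)\bigl(h_{\at}(\xi)-h_{\veps}(\xi)\bigr)$ is uniformly $O(\veps^2)$ in the weighted sense, so no localization away from the interface is needed. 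This gives $\norm{\wb{v}_k}_{L^2}\to 0$ directly, a frequency-splitting argument upgrades this to $\norm{v_k}_{\veps_k,0}\to 0$, and a second application of the regularity estimate yields the contradiction. You instead apply the regularity estimate once to pin the discrete $L^2$ mass away from zero, extract a nonzero continuum limit by Rellich compactness, and identify its equation by testing against smooth functions supported off the interface; this trades the paper's quantitative Fourier/mollifier machinery for a softer weak-convergence argument that needs only interior consistency, at the price of the standard but nontrivial bookkeeping of discrete-to-continuum compactness, summation by parts, and the density of test functions supported off a null set. Two caveats for your route: the compactness step requires a smooth interpolant such as the paper's $Q_{\veps}$, for which $\norm{Q_{\veps}u}_{H^2}\simeq\norm{u}_{\veps,2}$ (a piecewise-linear interpolant would not be bounded in $H^2(\Omega)$); and the passage $\veps_n^{d}\sum_x v_n(x)(\mc{H}_{\at}^{\ast}\phi)(x)\to\int_\Omega v\,\mc{H}_{\CB}\phi$ should be justified by splitting off the $O(\veps^2)$ consistency error and converting the remaining discrete sum to an integral via the interpolant. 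With these points made explicit, your argument is a valid alternative proof.
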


To obtain the stability estimate from the regularity estimate
Theorem~\ref{thm:regularity}, we need to eliminate $\norm{v}_{\veps,
  0}$ on the right hand side of \eqref{eq:regularity}. The proof is
based on the uniqueness of the continuous system from ellipticity, the
consistency of the finite difference schemes to the continuous system,
and the regularity estimate Theorem~\ref{thm:regularity}. We remark
that the stability analysis in our previous work \cite{LuMing:2011}
relies on the smoothness of $\varrho$, so it does not apply to the
sharp transition case. The current proof does not require smoothness
of $\varrho$ and so applies both to the hybrid method with smooth or
sharp transitions.

In order to connect the finite difference system with continuous PDE,
we need to extend grid functions on $\Omega_{\veps}$ to continuous
functions defined in $\Omega$. For this purpose, let us define an
interpolation operator $Q_{\veps}$ as follows. For any lattice
function $u$ on $\Omega_{\veps}$, we define $Q_{\veps} u \in
L^2(\Omega)$ as
\begin{equation}\label{eq:Qvepsdef}
  (Q_{\veps} u)(x) = \sum_{\xi \in \LL_{\veps}^{\ast}}
  e^{\I x \cdot \xi} \wh{u}(\xi), \quad x \in \Omega.
\end{equation}
By the Fourier inversion formula, we know that $Q_{\veps} u$ agrees
with $u$ on $\Omega_{\veps}$. We recall that by
\cite{LuMing:2011}*{Lemma 4.2},
\begin{equation}\label{eq:interpolation}
  c_k \norm{u}_{H^k_{\veps}(\Omega)} \leq
  \norm{Q_{\veps} u}_{H^k(\Omega)} \leq C_k \norm{u}_{H^k_{\veps}(\Omega)}, \quad \forall \; k \geq 0.
\end{equation}
Let $\chi$ be a standard nonnegative cut-off function on $\RR^d$,
which is smooth and compactly supported, with $\norm{\chi}_{L^1} = 1$.
Let $\chi_{\veps}$ be the scaled version $\chi_{\veps}(x) =
\veps^{-(\alpha d)} \chi( \veps^{-\alpha} x)$ for some $\alpha$ with
$0 < \alpha < 1$. The choice of the value of $\alpha$ will be
specified in the proof of Proposition~\ref{prop:contcons}.  Define a
low-pass filter operator $K_{\veps}$ for $f \in L^2(\Omega)$ using
$\wh{\chi_{\veps}}$ as Fourier multiplier:
\begin{equation*}
  \wh{K_{\veps} f}(\xi) = (2\pi)^{d} \wh{f}(\xi) \wh{\chi_{\veps}}(\xi)
  = (2\pi)^{d} \wh{f}(\xi) \wh{\chi}(\veps^{\alpha} \xi).
\end{equation*}
In real space, $K_{\veps}$ convolves $f$ with $\chi_{\veps}$.  Integrating by parts, we
obtain
\begin{align}
  & \label{eq:chidecay} \abs{\wh{\chi_{\veps}}(\xi)} \leq C_k
  \abs{\veps^{\alpha} \xi}^{-k}\quad \text{for all}\ k \in \ZZ_+, \\
  & \label{eq:chiunity} (2\pi)^{d} \wh{\chi_{\veps}}(0) = 1.
\end{align}
Hence, $K_{\veps}$ is indeed a low-pass filter.
For simplicity of notation, we denote
\begin{equation*}
  \wb{u}_{\veps} = K_{\veps} Q_{\veps} u_{\veps}
\end{equation*}
for any lattice function $u_{\veps}$ on $\Omega_{\veps}$.

We will use the following result on the consistency of the
symbols. It is an easy corollary from the consistency of the scheme, the details can be found in the Supplementary Materials Section~\ref{sec:cons}.
\begin{lemma}[Consistency of symbols of linearized
  operators]\label{lem:Hcons}
  There exists $\veps_0 > 0$ and $s > 0$ such that for any $\veps \leq
  \veps_0$, $x \in \Omega_{\veps}$, and $\xi \in \LL^{\ast}_{\veps}$,
  we have
  \begin{align}
    &\abs{ {h}_{\at}(\xi) - {h}_{\CB}(\xi) } \leq C \veps^2 (1
    + \abs{\xi}^2)^{s/2},\label{eq:HconsatCB}\\
     %\intertext{and} \\
    &\abs{ {h}_{\at}(\xi) - {h}_{\qc}(x, \xi)}\leq C \veps^2
    (1 + \abs{\xi}^2)^{s/2}. \label{eq:Hconsathy}
  \end{align}
\end{lemma}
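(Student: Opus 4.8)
The plan is to reduce both estimates to Taylor expansions of the symbols in powers of $\veps$, exploiting the symmetric (backward-of-forward) difference structure that linearization at the equilibrium $u=0$ produces. First I would dispose of \eqref{eq:Hconsathy} using the linearized decomposition \eqref{eq:linearwth}: since at equilibrium $h_{\at}$ and $h_{\veps}$ are $x$-independent, we have $h_{\at}(\xi) - h_{\qc}(x,\xi) = \varrho(x)\bigl(h_{\at}(\xi) - h_{\veps}(\xi)\bigr)$, and because $0\le\varrho\le 1$ pointwise, it suffices to bound $\abs{h_{\at}(\xi) - h_{\veps}(\xi)}$. By the triangle inequality this follows once we prove \eqref{eq:HconsatCB} together with the analogous finite-element estimate $\abs{h_{\veps}(\xi) - h_{\CB}(\xi)}\le C\veps^2(1+\abs{\xi}^2)^{s/2}$. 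Thus the whole lemma rests on two symbol-consistency bounds, one for the atomistic operator and one for the finite-element operator, each measured against the continuum Cauchy--Born symbol $h_{\CB}$.

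For the atomistic bound I would compute $h_{\at}(\xi)$ explicitly from the defining relation $(\mc{H}_{\at}e_k e^{\I x\cdot\xi})_j = (h_{\at})_{jk}(\xi)e^{\I x\cdot\xi}$. Because $\mc{H}_{\at}$ linearizes $\mc{F}_{\at}$ at $u=0$, each entry is a finite sum of products $\bfd{\veps,s}\ffd{\veps,s'}$ acting on a plane wave, whose symbol is a trigonometric polynomial in $\veps\xi$; after summation over the stencil $S$ these assemble into terms of the form $c_s\, g(\veps s\cdot\xi)/\veps^2$ with $g(\theta)=2(1-\cos\theta)$, together with cross terms of the same even parity. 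The leading term of the Taylor expansion reproduces exactly the quadratic-in-$\xi$ symbol $h_{\CB}(\xi)$ of $-\divop(D_A W_{\CB}\nabla u)$, and the crucial point is that the backward-of-forward structure, reflecting the inversion symmetry of the simple Bravais lattice, makes the first-order term in $\veps$ cancel identically. The remainder is then controlled by $\abs{g(\theta)-\theta^2}\le C\theta^4$ for $\abs{\theta}\le\theta_0$; since $\xi\in\LL^{\ast}_{\veps}$ lies in the Brillouin zone we have $\veps\abs{\xi}\lesssim 1$, hence $\abs{g(\veps s\cdot\xi)-(\veps s\cdot\xi)^2}/\veps^2\le C\veps^2\abs{s\cdot\xi}^4\le C'\veps^2(1+\abs{\xi}^2)^2$, which gives \eqref{eq:HconsatCB} with $s$ equal to (at most) twice the maximal polynomial degree occurring in the stencil.

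For the finite-element bound I would argue identically: on the translation-invariant triangulation with $P_1$ elements the linearized stiffness operator $\mc{H}_{\veps}$ is again a symmetric second-order difference operator, so its symbol $h_{\veps}(\xi)$ is an even trigonometric polynomial whose leading term is $h_{\CB}(\xi)$, and the standard $P_1$ consistency on a uniform mesh leaves an even $O(\veps^2)$ remainder, controlled uniformly over the Brillouin zone by the same bounded-argument Taylor estimate. Combining the atomistic and finite-element bounds by the triangle inequality yields $\abs{h_{\at}(\xi)-h_{\veps}(\xi)}\le C\veps^2(1+\abs{\xi}^2)^{s/2}$, and with the reduction of the first paragraph this completes \eqref{eq:Hconsathy}.

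The step I expect to be the main obstacle is the uniform control of the Taylor remainder over the whole reciprocal lattice $\LL^{\ast}_{\veps}$: away from $\xi=0$ the argument $\veps s\cdot\xi$ is $O(1)$ rather than small, so the naive pointwise remainder $\theta^4$ does not decay, and one must use the Brillouin-zone bound $\veps\abs{\xi}\lesssim 1$ to convert $\theta^4$ into the advertised $\veps^2(1+\abs{\xi}^2)^{s/2}$ growth. Equally important, and easy to overlook, is verifying that the $O(\veps)$ terms genuinely cancel: this is where the symmetric backward-of-forward structure must be invoked, since without it the error would only be $O(\veps)$ and the second-order convergence of Theorem~\ref{thm:main} would fail.
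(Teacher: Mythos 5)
Your proposal is correct and follows essentially the same route as the paper's (supplementary) proof: both estimates reduce, via \eqref{eq:linearwth} and the triangle inequality through $h_{\CB}$, to the $O(\veps^2)$ Taylor expansion of the atomistic and finite-element symbols about $\xi$, i.e., to the consistency of the schemes, with the $O(\veps)$ terms cancelling by inversion symmetry. You also correctly flag the two delicate points: the cancellation of the odd-order terms (which for the cross terms $s_1\neq s_2$ needs the symmetry $(s_1,s_2)\mapsto(-s_1,-s_2)$ of the stencil $S$, not merely the backward-of-forward structure of each summand) and the Brillouin-zone bound $\veps\abs{\xi}\lesssim 1$ that converts the quartic remainder into the stated $\veps^2(1+\abs{\xi}^2)^{s/2}$ form.
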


The key element of the proof of Theorem~\ref{thm:stability} is the
following proposition.
\begin{proposition}\label{prop:contcons}
  For $\{v_{\veps}\}_{\veps > 0}$ such that $v_{\veps}\in
  H^2_{\veps}(\Omega)$ and $\norm{v_{\veps}}_{\veps, 2}$ is uniformly
  bounded, we have
  $\lim_{\veps \to 0+} \norm{\mc{H}_{\CB} \wb{v}_{\veps}-
      \wb{\mc{H}_{\qc} v_{\veps}}}_{L^2(\Omega)} = 0$.
\end{proposition}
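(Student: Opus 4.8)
The plan is to pass to the Fourier side and exploit that $\mc{H}_{\at}$, $\mc{H}_{\veps}$ and $\mc{H}_{\CB}$ are all translation invariant with symbols $h_{\at}(\xi)$, $h_{\veps}(\xi)$, $h_{\CB}(\xi)$, whereas the only $x$-dependence of $\mc{H}_{\qc}$ enters through the rough multiplier $\varrho$. Since $\wb{u}_{\veps}=K_{\veps}Q_{\veps}u_{\veps}$ acts on Fourier coefficients as multiplication by $(2\pi)^d\wh{\chi}(\veps^{\alpha}\xi)$ on $\LL^{\ast}_{\veps}$, the operator $K_{\veps}Q_{\veps}$ commutes with any constant-coefficient lattice operator. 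I would therefore split
\[
  \mc{H}_{\CB}\wb{v}_{\veps}-\wb{\mc{H}_{\qc}v_{\veps}}
  =\bigl(\mc{H}_{\CB}\wb{v}_{\veps}-\wb{\mc{H}_{\at}v_{\veps}}\bigr)
  +\wb{(\mc{H}_{\at}-\mc{H}_{\qc})v_{\veps}},
\]
and note $\mc{H}_{\at}-\mc{H}_{\qc}=\varrho(\mc{H}_{\at}-\mc{H}_{\veps})$.

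The first term is purely translation invariant: its $\xi$-th Fourier coefficient is $(2\pi)^d\wh{\chi}(\veps^{\alpha}\xi)\bigl(h_{\CB}(\xi)-h_{\at}(\xi)\bigr)\wh{v_{\veps}}(\xi)$. By Parseval and the symbol consistency estimate \eqref{eq:HconsatCB}, its $L^2$ norm is bounded by $\veps^2$ times $\bigl(\sum_{\xi}\abs{\wh{\chi}(\veps^{\alpha}\xi)}^2(1+\abs{\xi}^2)^s\abs{\wh{v_{\veps}}(\xi)}^2\bigr)^{1/2}$. Writing $(1+\abs{\xi}^2)^s=(1+\abs{\xi}^2)^{s-2}(1+\abs{\xi}^2)^2$ and using that the cutoff forces $\abs{\xi}\lesssim\veps^{-\alpha}$, so $(1+\abs{\xi}^2)^{s-2}\lesssim\veps^{-2\alpha(s-2)}$, this is controlled by $\veps^{4-2\alpha(s-2)}\norm{v_{\veps}}_{\veps,2}^2$, which vanishes provided $\alpha$ is small relative to the fixed exponent $s$ of Lemma~\ref{lem:Hcons}. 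This is the step that fixes the admissible range of $\alpha$.

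The second term is the difficult one: the discontinuous factor $\varrho$ destroys translation invariance, and $\abs{h_{\at}(\xi)-h_{\veps}(\xi)}\lesssim\veps^2(1+\abs{\xi}^2)^{s/2}$ ceases to be small near the top of the band $\abs{\xi}\sim\veps^{-1}$. I would split $v_{\veps}=v_{\veps}^{L}+v_{\veps}^{H}$ at frequency $\veps^{-\gamma}$ for an intermediate exponent $\alpha<\gamma<1$. On the low part the previous argument applies: since $\mc{H}_{\at}-\mc{H}_{\veps}$ preserves frequency support and $\norm{K_{\veps}Q_{\veps}(\varrho\,\cdot)}_{L^2}\lesssim\norm{\cdot}_{\veps,0}$ (boundedness of $K_{\veps}$ as a multiplier, of multiplication by $\varrho$ on $L^2$, and the interpolation bound \eqref{eq:interpolation}), the contribution is $\lesssim\veps^{4-2\gamma(s-2)}\norm{v_{\veps}}_{\veps,2}^2$, which tends to $0$ for $\gamma$ small. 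The high part is the crux. I would use that $\varrho$ depends only on the normal coordinate, so $\wh{\varrho}$ is supported on zero tangential frequency and decays like $\abs{\zeta_1}^{-1}$ in the normal direction. The outer filter keeps only $\abs{\xi}\lesssim\veps^{-\alpha}$, while $(\mc{H}_{\at}-\mc{H}_{\veps})v_{\veps}^{H}$ is supported on $\abs{\eta}>\veps^{-\gamma}$; since the tangential components must agree and are $\lesssim\veps^{-\alpha}\ll\veps^{-\gamma}$, the normal frequencies are separated by $\gtrsim\veps^{-\gamma}$. Multiplication by $\varrho$ then acts only through the far tail of $\wh{\varrho}$, and a Schur-test estimate of the resulting frequency kernel gives operator norm $\lesssim(\veps^{\gamma-\alpha}\log\veps^{-1})^{1/2}$. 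Combined with $\norm{(\mc{H}_{\at}-\mc{H}_{\veps})v_{\veps}^{H}}_{\veps,0}=\Or(1)$, this drives the high part to $0$ whenever $\gamma>\alpha$.

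Choosing $0<\alpha<\gamma<\min\{1,2/(s-2)\}$ then makes all three contributions vanish as $\veps\to0$, proving the proposition. I expect the main obstacle to be exactly the high-frequency piece of the second term: neither symbol consistency (useless near the Nyquist frequency) nor $L^2$-boundedness of $\varrho$ (which yields only $\Or(1)$) is enough, and the decay must instead be extracted from the frequency separation between the low-pass filter and the high-frequency input, mediated by the decay of $\wh{\varrho}$ coming from the planarity of the interface.
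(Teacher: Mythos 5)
Your proposal is correct in outline but takes a genuinely different, and substantially harder, route than the paper on the one term that matters. The paper's decomposition is
\begin{equation*}
  \mc{H}_{\CB} \wb{v}_{\veps}- \wb{\mc{H}_{\qc} v_{\veps}}
  = \bigl(\wb{\mc{H}_{\qc}\wb{v}_{\veps}} - \wb{\mc{H}_{\qc} v_{\veps}}\bigr)
  + \bigl(\wb{\mc{H}_{\at}\wb{v}_{\veps}} - \wb{\mc{H}_{\qc}\wb{v}_{\veps}}\bigr)
  + \bigl(\mc{H}_{\CB}\wb{v}_{\veps} - \wb{\mc{H}_{\at}\wb{v}_{\veps}}\bigr),
\end{equation*}
i.e.\ it commutes the filter \emph{inside} first: the difference $\mc{H}_{\at}-\mc{H}_{\qc}$ is only ever applied to the already-filtered function $\wb{v}_{\veps}$. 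Since the symbol bound \eqref{eq:Hconsathy} is uniform in $x$ (the $x$-dependence through $\varrho$ is harmless pointwise), and since $\abs{\wh{\wb{v}}_{\veps}(\xi)}\leq C_k\abs{\veps^{\alpha}\xi}^{-k}\abs{\wh{v}_{\veps}(\xi)}$ kills the $(1+\abs{\xi}^2)^{s/2}$ growth, the term is $\Or(\veps^{4-2\alpha k})$ with no analysis of frequency mixing by $\varrho$ whatsoever; the leftover commutator $\wb{\mc{H}_{\qc}(\wb{v}_{\veps}-v_{\veps})}$ is handled by Parseval and dominated convergence using only $\wh{\chi_{\veps}}(\xi)\to 1$. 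Your decomposition instead applies $\varrho(\mc{H}_{\at}-\mc{H}_{\veps})$ to the unfiltered $v_{\veps}$, which forces you into exactly the difficulty you identify: near the Nyquist frequency the symbol difference is only $\Or(1)$, and you must extract smallness from the interaction between the outer low-pass filter and the convolution kernel $\wh{\varrho}$.

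That crux step is where your argument is only sketched, and where the risk lies. The frequency-separation idea (tangential frequencies preserved, hence normal frequencies shifted by $\gtrsim\veps^{-\gamma}$, hence only the tail of $\wh{\varrho}$ contributes) is plausible for a planar interface, and your Schur bound $(\veps^{\gamma-\alpha}\log\veps^{-1})^{1/2}$ is of the right shape. But to make it rigorous you would need: (i) the decay estimate for the \emph{discrete} Fourier transform of $\varrho$ on the dual torus, measured in the torus metric (multiplication of lattice functions is cyclic convolution, so aliasing must be tracked); (ii) a treatment of the non-compactly-supported tail of $\wh{\chi}(\veps^{\alpha}\cdot)$, since the output frequency is only \emph{effectively} restricted to $\abs{\xi}\lesssim\veps^{-\alpha}$; and (iii) the claim $\norm{(\mc{H}_{\at}-\mc{H}_{\veps})v^H_{\veps}}_{\veps,0}=\Or(1)$, which holds but should come from the order-$(2,0)$ symbol bounds rather than from Lemma~\ref{lem:Hcons}. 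None of this is needed if you filter first. I would recommend reorganizing along the paper's lines; your argument for the translation-invariant term and your identification of the admissible range of $\alpha$ are fine as they stand.
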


\begin{proof}
  By triangular inequality,
  \begin{multline*}
    \norm{\mc{H}_{\CB} \wb{v}_{\veps}- \wb{\mc{H}_{\qc}
        v_{\veps}}}_{L^2(\Omega)} \leq \norm{\overline{\mc{H}_{\qc}
        \wb{v}_{\veps}} - \wb{\mc{H}_{\qc} v_{\veps}}}_{L^2(\Omega)} +
    \norm{\overline{\mc{H}_{\at} \wb{v}_{\veps}} - \overline{\mc{H}_{\qc}
        \wb{v}_{\veps}}}_{L^2(\Omega)} \\
    + \norm{\mc{H}_{\CB} \wb{v}_{\veps} - \overline{\mc{H}_{\at}
        \wb{v}_{\veps}}}_{L^2(\Omega)}.
  \end{multline*}
  Hence, it suffices to show each term on the right-hand side goes to
  zero as $\veps \to 0$.
  \smallskip

  \noindent\emph{Step 1.}
  By \eqref{eq:interpolation} and the definition of
  $\mc{H}_{\qc}$, we have
  \begin{equation*}
    \begin{aligned}
      \norm{\overline{\mc{H}_{\qc} \wb{v}_{\veps}} - \wb{\mc{H}_{\qc}
          v_{\veps}}}_{L^2(\Omega)} & \leq \norm{\mc{H}_{\qc}
        \wb{v}_{\veps} - \mc{H}_{\qc} v_{\veps}}_{\veps, 0} \\
      & \leq \norm{\mc{H}_{\at} \wb{v}_{\veps} - \mc{H}_{\at}
        v_{\veps}}_{\veps, 0} + \norm{\mc{H}_{\veps} \wb{v}_{\veps} -
        \mc{H}_{\veps} v_{\veps}}_{\veps, 0}.
    \end{aligned}
  \end{equation*}
  We now show that $\norm{\mc{H}_{\veps} \wb{v}_{\veps} -
    \mc{H}_{\veps} v_{\veps}}_{\veps, 0}$ converges to zero as $\veps
  \to 0$, the argument for the other term is identical. By Parseval's
  identity,
  \begin{equation}\label{eq:chiminusone}
    \norm{\mc{H}_{\veps} \wb{v}_{\veps} -
      \mc{H}_{\veps} v_{\veps}}_{\veps, 0} =  \norm{{h}_{\veps}(\xi)
      (\wh{\chi_{\veps}}(\xi) - 1) \wh{v}_{\veps}(\xi)}_{l^2(\LL_{\veps}^{\ast})}.
  \end{equation}
  Note that $
  \norm{{h}_{\veps}(\xi)\wh{v}_{\veps}(\xi)}_{l^2(\LL_{\veps}^{\ast})}
  = \norm{\mc{H}_{\veps} v_{\veps}}_{\veps, 0}$ is bounded as
  $v_{\veps} \in H^2_{\veps}(\Omega)$ and that $\wh{\chi_{\veps}}(\xi)
  \to 1$ for any $\xi \in \LL_{\veps}^{\ast}$, as $\veps \to 0$. By
  dominance convergence, the right-hand side of \eqref{eq:chiminusone}
  converges to zero in the limit. Therefore, $\norm{\overline{\mc{H}_{\qc}
      \wb{v}_{\veps}} - \wb{\mc{H}_{\qc} v_{\veps}}}_{L^2(\Omega)} \to 0$.

  \smallskip
  \noindent\emph{Step 2.} By \eqref{eq:interpolation}, we have
  \begin{equation*}
    \norm{\overline{\mc{H}_{\at} \wb{v}_{\veps}} - \overline{\mc{H}_{\qc}
        \wb{v}_{\veps}}}_{L^2(\Omega)} \lesssim
    \norm{\mc{H}_{\at} \wb{v}_{\veps} - \mc{H}_{\qc}
        \wb{v}_{\veps}}_{\veps, 0}.
  \end{equation*}
  Hence it suffices to estimate the right-hand side. We calculate
  \begin{equation*}
    \mc{H}_{\at} \wb{v}_{\veps} - \mc{H}_{\qc}\wb{v}_{\veps} =  \sum_{\xi \in \LL_{\veps}^{\ast}} ({h}_{\at}(\xi)
    - {h}_{\qc}(x, \xi)) \wh{\wb{v}}_{\veps}(\xi) e^{\I x \cdot
      \xi}.
  \end{equation*}
  Therefore, using \eqref{eq:Hconsathy} in Lemma~\ref{lem:Hcons}, we have
  \begin{equation*}
    \norm{\mc{H}_{\at} \wb{v}_{\veps} - \mc{H}_{\qc}
      \wb{v}_{\veps}}_{\veps, 0}^2 \lesssim
    \sum_{\xi \in \LL_{\veps}^{\ast}} \veps^4 (1 + \abs{\xi}^2)^{s}
    {\bigl\vert\wh{\wb{v}}_{\veps}(\xi)\bigr\vert}^2.
  \end{equation*}
  To estimate the right-hand side, note that by \eqref{eq:chidecay} we
  have for any $k \in \ZZ_+$,
  \begin{equation*}
    \abs{\wh{\wb{v}}_{\veps}(\xi)}^2 = \abs{\wh{\chi_{\veps}}(\xi)}^2
      \abs{\wh{v}_{\veps}(\xi)}^2 \leq C_k \abs{\veps^{\alpha} \xi}^{-2k}
      \abs{\wh{v}_{\veps}(\xi)}^2.
  \end{equation*}
  Taking $k$ sufficiently large so that $(1 + \abs{\xi}^2)^s
  \abs{\xi}^{-2k}$ is bounded for $\xi \in \LL^{\ast} \backslash
  \{0\}$, we then have
  \begin{equation*}
    \norm{\mc{H}_{\at} \wb{v}_{\veps} - \mc{H}_{\qc}
      \wb{v}_{\veps}}_{\veps, 0}^2 \lesssim \veps^4 \veps^{-2 \alpha k}
    \sum_{\xi \in \LL_{\veps}^{\ast}} \abs{\wh{v}_{\veps}(\xi)}^2.
  \end{equation*}
  Choose $\alpha$ in the low-pass filter such that $\alpha < 2 / k$,
  we get $\norm{\overline{\mc{H}_{\at} \wb{v}_{\veps}} -
    \overline{\mc{H}_{\qc} \wb{v}_{\veps}}}_{L^2(\Omega)} \to 0$ as
  $\veps\to 0+$.

  \smallskip
  \noindent\emph{Step 3.} By definition,
  $\wh{\mc{H}_{\CB} \wb{v}_{\veps}}(\xi)
    = {h}_{\CB}(\xi) \wh{\chi}(\veps^{\alpha} \xi) \wh{v}_{\veps}(\xi)$.
  For the discrete system, we have
  \begin{equation*}
    \begin{aligned}
      \wh{\wb{\mc{H}_{\at} v_{\veps}}}(\xi) & =
      \wh{\chi}(\veps^{\alpha} \xi) \Bigl( \frac{\veps}{2\pi} \Bigr)^d
      \sum_{x \in \Omega_{\veps}} e^{-\I \xi \cdot x} \sum_{\eta \in
        \LL^{\ast}_{\veps}} e^{\I x \cdot
        \eta}{h}_{\at}(\eta) \wh{v}_{\veps}(\eta) \\
      & = \wh{\chi}(\veps^{\alpha} \xi) {h}_{\at}(\xi)
      \wh{v_{\veps}}(\xi).
    \end{aligned}
  \end{equation*}
  Hence, we have, combined with \eqref{eq:HconsatCB} and
  \eqref{eq:chidecay}, for some $s > 0$ and any $k \in \ZZ_+$,
  \begin{equation*}
    \begin{aligned}
      \norm{\mc{H}_{\CB} \wb{v}_{\veps} - \wb{\mc{H}_{\at}
          v_{\veps}}}_{L^2(\Omega)}^2 & = \sum_{\xi \in
        \LL_{\veps}^{\ast}} ({h}_{\at}(\xi) - {h}_{\CB}(\xi))^2
      \abs{\wh{\chi}(\veps^{\alpha} \xi)}^2
      \abs{\wh{v}_{\veps}(\xi)}^2 \\
      & \leq C_k \veps^4 \sum_{\xi \in \LL_{\veps}^{\ast}} (1 +
      \abs{\xi})^{s} \abs{\veps^{\alpha} \xi}^{-2k}
      \abs{\wh{v}_{\veps}(\xi)}^2.
    \end{aligned}
  \end{equation*}
  We choose a sufficiently large $k$, so that the right-hand side is
  bounded by $C \veps^{4 - 2 \alpha k} \norm{v_{\veps}}_{\veps,
    0}^2$. The choice of $\alpha < 2/k$ as above guarantees that
  $\norm{\mc{H}_{\CB} \wb{v}_{\veps} - \wb{\mc{H}_{\at}
      v_{\veps}}}_{L^2(\Omega)} \to 0$ as $\veps \to 0$.
\end{proof}

\smallskip

\begin{proof}[Proof of Theorem~\ref{thm:stability}]
  Suppose \eqref{eq:stability} does not hold, then there is a sequence
  of functions $\{w_k\}$ and $\veps_k > 0$ such that $\lim_{k \to
    \infty} \norm{w_k}_{\veps_k, 2} \to \infty$, and
  $\norm{\mc{H}_{\qc} w_k}_{\veps_k, 0} \leq c$ and $\sum_{x\in
    \Omega_{\veps_k}} w_k(x) = 0$ for all $k$.  Set $v_k = w_k /
  \norm{w_k}_{\veps_k, 2}$, we then have
  \begin{equation*}
    \norm{v_k}_{\veps_k, 2} = 1 \quad \text{and} \quad  \sum_{x\in \Omega_{\veps_k}} v_k(x) = 0 \qquad \text{for all } k; \qquad
    \lim_{k \to \infty} \norm{\mc{H}_{\qc} v_k}_{\veps_k, 0} \to 0.
  \end{equation*}
  Note that $\mc{H}_{\CB} \wb{v}_k = \wb{\mc{H}_{\qc} v_k} + (
  \mc{H}_{\CB} \wb{v}_k - \wb{\mc{H}_{\qc} v_k})$.  Since
  $\norm{\mc{H}_{\qc} v_k}_{\veps_k, 0} \to 0$, we have $
  \norm{\wb{\mc{H}_{\qc} v_k}}_{L^2(\Omega)} \to 0$, as $k \to
  \infty$.  Moreover, by Proposition~\ref{prop:contcons},
  \begin{equation*}
    \norm{\mc{H}_{\CB} \wb{v}_k -
      \wb{\mc{H}_{\qc} v_k}}_{L^2(\Omega)} \to 0 \quad \text{as } k \to \infty.
  \end{equation*}
  Hence $\norm{\mc{H}_{\CB} \wb{v}_k }_{L^2(\Omega)} \to 0$. Note also
  that the average of $\wb{v}_k$ is zero, since $\wh{\wb{v}_k}(0) =
  0$. By the invertibility of $\mc{H}_{\CB}$ on the subspace
  orthogonal to constant function, $\norm{\wb{v}_k}_{L^2(\Omega)} \to
  0$, as $ k \to \infty$, while $\norm{v_k}_{\veps_k, 2} = 1$. It
  follows then $\norm{v_k}_{\veps_k, 0} \to 0$. Indeed, since
  \begin{equation*}
    \norm{v_k}_{\veps_k, 1} = \sum_{\xi \in \LL^{\ast}_{\veps_k}}
    \Lambda^2_{\veps_k}(\xi) \abs{\wh{v}_k(\xi)}^2 \leq 1,
  \end{equation*}
  for any $\delta > 0$, there exist $\Xi>0$ and $k_1$, such that for
  any $k \geq k_1$,
  \begin{equation}\label{eq:k1}
    \sum_{\xi \in \LL^{\ast}_{\veps_k},\, \abs{\xi} \geq \Xi}
    \abs{\wh{v}_k(\xi)}^2 < \delta / 2.
  \end{equation}
  On the other hand, due to \eqref{eq:chiunity}, there exists $k_2$,
  such that for $k \geq k_2$
  \begin{equation}\label{eq:k2}
    \sum_{\xi \in \LL^{\ast}_{\veps_k}, \, \abs{\xi} < \Xi}
    \Bigl\lvert \abs{\wh{v}_k(\xi)^2} - \abs{\wh{\wb{v}}_k(\xi)}^2
    \Bigr\rvert \leq \delta / 4.
  \end{equation}
  Moreover, as $\norm{\wb{v}_k}_{L^2} \to 0$, there exists $k_3$, such
  that for $k \geq k_3$,
  \begin{equation}\label{eq:k3}
    \sum_{\xi \in \LL^{\ast}_{\veps_k}, \abs{\xi} < \Xi}
    \abs{\wh{\wb{v}}_k(\xi)}^2 \leq \delta / 4.
  \end{equation}
  Combining~\eqref{eq:k1}--\eqref{eq:k3} together, we have for $k \geq
  \max(k_1, k_2, k_3)$,
  \begin{equation*}
    \norm{v_k}_{\veps_k, 0}^2 = \sum_{\xi \in \LL^{\ast}_{\veps_k}}
    \abs{\wh{v}_k}^2\leq \delta.
  \end{equation*}
  Hence, $\lim_{k \to \infty} \norm{v_k}_{\veps_k, 0} = 0$. From
  Theorem~\ref{thm:regularity}, this implies
  $\lim_{k\to \infty} \norm{v_k}_{\veps_k, 2} = 0$.
  The contradiction with the choice of $v_k$ proves the Theorem.
\end{proof}

% Theorem~\ref{thm:stability} extends to a deformed state $u$, we omit
% the proof which is the same as \cite{LuMing:2011}*{Theorem 4.6}.

% \begin{theorem}[Stability]\label{thm:ustability}
%   Under Assumptions~\ref{assump:stabatom}, \ref{assump:boundarycond},
%   and \ref{assump:complement}, there exists $\delta > 0$, such that
%   for any $\veps > 0$ and $u$, $\norm{u}_{W^{2, \infty}_{\veps}} \leq
%   \delta$ and any $v \in H^2_{\veps}(\Omega)$ with mean zero, we have
%   \begin{equation}\label{eq:ustability}
%     \norm{v}_{\veps,2} \leq C \norm{\Pi_{\veps} \mc{H}_{\qc}[u] v}_{\veps,0},
%   \end{equation}
%   where the constant depends on $\delta$, but is independent of $u$,
%   $v$ and $\veps$.
% \end{theorem}
%
\section{Example}\label{sec:example}

We apply the result here to some concrete examples of
atomistic-to-continuum hybrid method. More specifically, we check the
assumptions of Theorem~\ref{thm:stability} which then imply the
stability of the scheme near the ground state.

\subsection{Example 1. Triangular lattice with harmonic interaction} Consider a force-based method on
a triangular lattice.  Figure~\ref{schematic} gives the geometry and
the choice of the atomistic and continuum regions. The interface
between the atomistic and continuum regions is parallel to the $(1, \sqrt{3})/2$
direction of the lattice. The interaction between atoms is assumed to
be harmonic (quadratic potential). The force balance equation reads
\begin{equation}\label{eq:trihybrid}
\Pi_{\veps} \mc{F}_{\qc}[z](x)=f_\veps(x),
\end{equation}
with $\mc{F}_{\qc}$ given by $\mc{F}_{\at}$ in $\Omega_a$ and by
$\mc{F}_{\veps}$ in $\Omega_c$, where
\[
\mc{F}_{\at}[z](x){:}=\dfrac1{\veps}\sum_{i=1}^{12}D_{\veps,\mu_i}^+z(x),
\qquad\mc{F}_{\veps}[z](x){:}=\dfrac4{\veps}\sum_{i=1}^6D_{\veps,\mu_i}^+z(x),
\]
where $\{\mu_i\}_{i=1}^6$ and $\{\mu_i\}_{i=7}^{12}$ are the first and
the second neighborhood interaction ranges of the triangular lattice
$\LL$, respectively.
\begin{figure}
\centering
\includegraphics[width=2.4in]{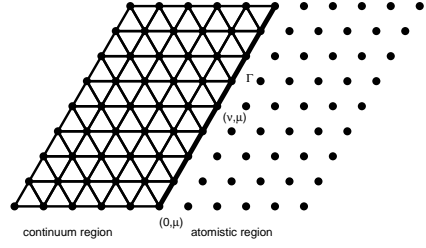}
\caption{\small Illustration of the geometry of the hybrid method. The
  coordinate of each atom is $(\nu,\mu)$, and the interface $\Gamma$
  along the atoms labeled with $(0,\mu)$. The interface $\Gamma$
  divides the domain into the atomistic region $\Omega_a$ and the
  continuum region $\Omega_c$.}\label{schematic}
\end{figure}

We rewrite the coupled force balance equation into a system. For any
$x=ma_1+na_2\in\Omega_a$, we define the map $\wt{x}= -m a_1+na_2$, and
denote $z(x)=\bigl(y(x-2a_1),y(\wt{x})\bigr)$, where $a_1$ and $a_2$
are the basis vectors of the triangular lattice $\LL$. The hybrid
method can be rewritten as
\begin{equation}\label{eq:equilibrium}
\mc{F}_{\qc}[z](x)=\ell_{\veps}(x),
\end{equation}
where
$\mc{F}_{\qc}=\bigl(\begin{smallmatrix}
\mc{F}_{\at}&0\\
0&\mc{F}_{\veps}
\end{smallmatrix}\bigr)$ and $\ell_{\veps}(x)=\bigl(f_{\veps}(x-2a_1),f_{\veps}(\wt{x})\bigr)$.
It is supplemented
with the following boundary conditions:
%\begin{equation}\label{eq:bdexample}
%\Lr{D_{\veps,a_1}^+}^iz(x)=\mc{S}
%\Lr{D_{\veps,a_1}^{-}}^iz(x)\qquad\text{on\quad}\Gamma,i=0,1.
%\end{equation}
%where $\Gamma$ is the interface as shown in Figure~\ref {schematic} and
%\[
%\mc{S}=\begin{pmatrix}
%0&1\\
%1&0
%\end{pmatrix}.
%\]
\begin{equation}\label{eq:bdexample}
\sum_{j=1}^2B_{\veps,kj}z_j(x)=0,\qquad x\in\Gamma, k=1,2,3,
\end{equation}
where $B_{\veps,k1}=\Lr{D_{\veps,\mu_1}^+}^{k-1}$, and
$B_{\veps,k2}=-\Lr{D_{\veps,\mu_1}^+}^{k-1}T_{\veps}^{-2\mu_1}$.

Applying Fourier transform in the tangential direction $a_2$, we obtain
\begin{align*}
  \wt{\mc{F}}_{\veps}&=\dfrac4{\veps^2}\Lr{T_{\veps}^{\mu_1}(1+e^{-\I\xi\cdot
      \mu_2}) +T_{\veps}^{-\mu_1}(1+e^{\I\xi\cdot\mu_2})+2\cos(\xi\cdot\mu_2)-6},\\
  \wt{\mc{F}}_{\at}&=\dfrac1{\veps^2}\Bigl(T_{\veps}^{\mu_1}(1+e^{-\I\xi\cdot
    \mu_2} +e^{\I\xi\cdot\mu_2}+e^{-2\I\xi\cdot\mu_2})
  +T_{\veps}^{-\mu_1}(1+e^{\I\xi\cdot\mu_2}+e^{-\I\xi\cdot\mu_2}+e^{2\I\xi\cdot \mu_2})\\
  &\phantom{\dfrac1{\veps^2}\Bigl(}\qquad+T_{\veps}^{2\mu_1}e^{-\I\xi\cdot
    \mu_2}+T_{\veps}^{-2\mu_1}e^{\I\xi\cdot\mu_2}+2\cos(\xi\cdot\mu_2)-12\Bigr).
\end{align*}

The first step is to consider the distribution of the
roots of the following two characterization equations. Let
$\zeta=e^{\I\xi\cdot\mu_2}=e^{\I\theta}$, obviously $\zeta\not=1$
unless $\theta=0$.
\begin{equation}
z(1+\wb{\zeta})+z^{-1}(1+\zeta)+\zeta+\wb{\zeta}-6=0,\label{eq:cha1}
\end{equation}
and
\begin{equation}\label{eq:cha2}
  \begin{aligned}
    z^2\wb{\zeta}+z^{-2}\zeta&+z(1+\zeta+\wb{\zeta}+\wb{\zeta^2})\\
    &+z^{-1}(1+\zeta+\wb{\zeta}+\zeta^2)+\zeta+\wb{\zeta}-12=0.
  \end{aligned}
\end{equation}
The equation~\eqref{eq:cha1} has two roots $z_1$ and $z_2$ that satisfy
$\abs{z_1z_2}=\abs{\zeta}=1$. We cannot have $\abs{z_1}=\abs{z_2}=1$,
otherwise $\det\wt{\mc{F}}_{\veps}=0$, which contradicts with the bulk
stability condition. Therefore, we have two distinct roots, one inside
the unit disk, the other outside. In particular, we denote the root inside
the unit disk as $z_1$.

%To deal with~\eqref{eq:cha2},
We let $z=w\zeta^{1/2}$, and
write~\eqref {eq:cha2} as
\begin{equation}\label{eq:cha21}
w^2+w^{-2}+(\zeta^{1/2}+\wb{\zeta}^{1/2}+\zeta^{3/2}
+\wb{\zeta}^{3/2})(w+w^{-1})
+\zeta+\zeta^{-1}-12=0.
\end{equation}
Let $s=w+w^{-1}$,
$A=\zeta^{1/2}+\wb{\zeta}^{1/2}+\zeta^{3/2}+\wb{\zeta}^{3/2}$ and
$f(s)=s^2+As+\zeta+\wb{\zeta}-14$, the above equation can be written
as $f(s)=0$.  A direct calculation % gives
% \begin{align*}
%   f(2)&=2A+\zeta+\wb{\zeta}-10\\
%   &=2(\zeta^{1/2}+\wb{\zeta}^{1/2}-2)\Lr{\Lr{\zeta^{1/2}+\wb{\zeta}^{1/2}+5/4}^2
%     +\dfrac{23}{16}} \leq 0.
% \end{align*}
% It is clear that $f(2)<0$ unless $\zeta=1$, which is
% impossible. Similarly,
% \begin{align*}
% f(-2)&=-2A+\zeta+\wb{\zeta}-10\\
% &=-2(\zeta^{1/2}+\wb{\zeta}^{1/2}+2)\Lr{\Lr{\zeta^{1/2}+\wb{\zeta}^{1/2}-5/4}^2
% +\dfrac{23}{16}} <0.
% \end{align*}
shows that $f(2) < 0$ and $f(-2)<0$, which implies that there exist
two roots $s_1$ and $s_2$ with $s_1>2$ and $s_2<-2$. This yields that
\eqref{eq:cha21} has four roots $\{w_i\}_{i=1}^4$ satisfying $w_1>1,
w_2<1,-1<w_3<0$ and $w_4<-1$. % Moreover, the roots satisfy
% \begin{align*}
% w+w^{-1}&=s_1,\qquad w=w_1,w_2,\\
% w+w^{-1}&=s_2,\qquad w=w_3,w_4.
% \end{align*}
To sum up, there exists four distinct roots for~\eqref {eq:cha2} such
that two inside the unit disk while the other two outside the unit
disk. In particular, $\abs{z_2},\abs{z_3}<1$.
Therefore, we require three boundary conditions, which is
consistent with~\eqref{eq:bdexample}.

We next verify Assumption~\ref{assump:complement} for the coupling
scheme.
For mode I, we have the following form of the solution
\[
z(x_i,\zeta)=c_1 \bigl(\begin{smallmatrix}1\\
0\end{smallmatrix}\bigr)z_1^i+c_2\bigl(\begin{smallmatrix}0\\
1\end{smallmatrix}\bigr)z_2^i+c_3\bigl(\begin{smallmatrix}0\\
1\end{smallmatrix}\bigr)z_3^i,\qquad\abs{z_k}<1,\quad k=1,2,3.
\]
As $\zeta\to 1$, we have
$\abs{z_1},\abs{z_2}\to 1$ while $\abs{z_3}\to 3-2\sqrt2$. Substituting
the above expression into the boundary conditions, we obtain
\[
\begin{pmatrix}
1&-1&-1\\
z_1&-z_2&-z_3\\
z_1^{-1}&-z_2^{-1}&-z_3^{-1}
\end{pmatrix}
\begin{pmatrix}c_1\\
c_2\\
c_3\end{pmatrix}=\begin{pmatrix}0\\
0\\0\end{pmatrix}.
\]
The determinant of the matrix is nonzero since $z_1$, $z_2$ and $z_3$
are distinct, the details can be found in the Supplementary Materials
Lemma~\ref{lem:root}. Hence we conclude that there does not exist
mode I eigenfunction.

For mode II, notice that by definition, we have
\[
\Lr{\wb{\mc{H}}_{\text{CB}}(\partial_x,\theta)w}(x)=6(\partial_x^2-\theta^2)w(x)=0,
\]
it is clear the only solution satisfying $w(x)\to 0$ as
$x\to\pm\infty$ and the boundary condition is the trivial solution $w(x)\equiv 0$, hence mode II eigenfunction does not exist.

For mode III, we have $\zeta\to 1$ as $\theta\to 0$. The
solution takes the form
\[
\lim_{\theta\to 0}z(x_i,\theta)=c_3(0)\bigl(\begin{smallmatrix}0\\
  1\end{smallmatrix}\bigr)z_3^i\quad\text{with}\quad z_3=2\sqrt2-3,
\]
substituting the above expression into the boundary condition, we obtain
\[
c_3(0)\bigl(\begin{smallmatrix}0\\
1\end{smallmatrix}\bigr)z_3^{-1}=c_3(0)\bigl(\begin{smallmatrix}0\\
1\end{smallmatrix}\bigr)z_3,
\]
which yields $c_3(0)=0$. This concludes that there does not exist mode III
eigenfunction. Therefore, the coupling scheme is stable and convergent.

\subsection{Example 2. Triangular lattice with truncated Lennard-Jones potential}
For the second example, we consider an atomistic-to-continuum coupling
with the same geometry as Example 1; but now the atoms are interacted
with Lennard-Jones potential, truncated to the second nearest neighbor
interactions. The (linearized) force balance equation has the same
form as~\eqref{eq:trihybrid}, with
\begin{align*}
& \begin{aligned}
\mc{F}_{\at}[z](x)&=\dfrac{2\ka_1}{\veps}\sum_{i=1}^6
D_{\veps,\mu_i}^+z(x)+\dfrac{2\ka_2}{\veps}
\sum_{i=1}^6\dual{D_{\veps,\mu_i}^+z(x)}{\mu_i}\mu_i\nn\\
&\quad+\dfrac{2\ka_3}{\veps}\sum_{i=7}^{12}D_{\veps,\mu_i}^+z(x)
+\dfrac{2\ka_4}{\veps}
\sum_{i=7}^{12}\dual{D_{\veps,\mu_i}^+z(x)}{\mu_i}\mu_i, \quad \text{and,}
\end{aligned} \\
& \mc{F}_\veps[z](x)=\frac{2(\ka_2+9\ka_4)}{\veps}\sum_{i=1}^6
\dual{D_{\veps,\mu_i}^+z(x)}{\mu_i}\mu_i,
\end{align*}
where $\ka$'s are defined as
\begin{align*}
  & \ka_1=g(1),\quad\ka_2=h(1),\quad\ka_3=g(\sqrt3),\quad\ka_4=h(\sqrt3), \\
  & g(r){:}=12K(-Kr^{-14}+r^{-8}),\quad
  h(r){:}=12K(14Kr^{-14}-8r^{-8})
\end{align*}
with $K=(1+3^{-3})/(1+3^{-6})$ and $\veps=(2/K)^{1/6}\sigma$.
For this example, we are no longer able to check stability by hand. Hence, we will combine with numerical computation to check the stability conditions.

Applying Fourier transform in the tangential direction, we obtain
\[
\wt{\mc{F}}_{\at}=2\veps^{-2}\mc{M}^{\at},\quad
\wt{\mc{F}}_{\veps}=2(\ka_2+9\ka_4)\veps^{-2}\mc{M}^{\CB}.
\]
For brevity, we only give the explicit expression of $\mc{M}^{\CB}$, which is a $2 \times 2$ matrix with
\[
\begin{aligned}
\mc{M}^{\CB}_{11}&=(T_{\veps}^{\mu_1}+T_{\veps}^{-\mu_1})
+\frac14(\zeta+\overline{\zeta}+\zeta T_{\veps}^{-\mu_1}+\overline{\zeta}T_{\veps}^{\mu_1})-3,\\
\mc{M}^{\CB}_{12}&=\mc{M}^{\CB}_{21}=\frac{\sqrt{3}}{4}(\zeta+\overline{\zeta}-\zeta T_{\veps}^{-\mu_1}-\overline{\zeta}T_{\veps}^{\mu_1}),\\
\mc{M}^{\CB}_{22}&=\frac34(\zeta+\overline{\zeta}+\zeta T_{\veps}^{-\mu_1}+\overline{\zeta}T_{\veps}^{\mu_1})-3.
\end{aligned}
\]

The first step is to consider the distribution of the roots of the two characteristic equations: $\det\mc{M^{\at}}=0$ and $\det\mc{M}^{\CB}=0$.
As to the characteristic equation $\det\mc{M^{\CB}}=0$, it is clear to
see there are four roots, two roots inside the unit disk, which are
denoted by $z_1$ and $z_2$. The remaining two roots are $\zeta/z_1$ and
$\zeta/z_2$.  The characteristic equation $\det\mc{M^{\at}}=0$ has eight
roots: Four are inside the unit disk, which are denoted by
$\{z_i\}_{i=3}^6$. The remaining four roots are $z_7=1/\wb{z}_3,
z_8=1/\wb{z}_4, z_9=\zeta/z_5$, and $z_{10}=\zeta/z_6$.  It may be checked
numerically that the roots are distinct.  Therefore, we need six
boundary conditions in total, which is consistent
with~\eqref{eq:bdexample}.

%The roots inside the unit circle are of interest, which are denoted by $z_i$ for $i=3,\cdots,6$
%\[
%u_{i,j}(\zeta)=\sum_{k=3}^{6}c_kz_k^iV^{\at}(z_k,\zeta),
%\]
%where $V^{\at}(z,\zeta)=(\mc{M}^{\at}_{22}(z,\zeta),-\mc{M}^{\at}_{21}(z,\zeta))^{T}$.
%A direct calculation gives
%\[
%T_{\veps,a_1}^{\pm k}z_j^iV^{\at}(z_j,\zeta)=z_j^iV^{\at}(z_j,\zeta)z_j^{\pm k}.
%\]
%Hence,
%\[
%\mc{M}^{\at}(T_{\veps,a_1}^1,\zeta)z_j^iV^{\at}(z_j,\zeta)
%=z_j^i\mc{M}^{at}(z_j,\zeta)V^{\at}(z_j,\zeta)=0.
%\]
%
%As to $\CB$ part, we have $\wt{\mc{F}}_{\CB}v_{i,j}=0$.
%\[
%\wt{\mc{F}}_{\CB}=\dfrac{2(\ka_2+9\ka_4)}{\veps^2}\mc{M}^{\CB},
%\]
%Repeat what we have done for the atomistic equilibrium equations. We have
%four roots of the equation , and two of them
%are inside the unit circle when $\zeta\neq 1$. Denote them as $z_1$ and $z_2$, and $\abs{z_1},\abs{z_2}\to 1$ as $\zeta\to 1$. We write

We next verify Assumption C. For mode I, we have the following form of
the solution: $z(x_i,\zeta)=\Lr{z_1(x_i,\zeta),z_2(x_i,\zeta)}$ with
\begin{align*}
z_1(x_i,\zeta) =\sum_{k=3}^{6}c_kz_k^i\mc{M}_2^{\at}(z_k,\zeta), \quad
z_2(x_i,\zeta) =c_1z_1^i\mc{M}_2^{\CB}(z_1,\zeta)
+c_2z_2^i\mc{M}_2^{\CB}(z_2,\zeta),
\end{align*}
where $\mc{M}_2^{\at}{:}=(\mc{M}^{\at}_{22},-\mc{M}^{\at}_{21})^{\TT}$
and $\mc{M}_2^{\CB}{:}=(\mc{M}^{\CB}_{22},-\mc{M}^{\CB}_{12})^{\TT}$.
Substituting the above expressions into the boundary condition, we
obtain $A(\zeta) \bd{c}(\zeta) = 0$, where $\bd{c}(\zeta) =
(c_1(\zeta), c_2(\zeta), \ldots, c_6(\zeta))^{\TT}$ and $A(\zeta)$ is
given by
\[\scriptsize
\begin{pmatrix}-\mc{M}^{\CB}_{22}(z_1)&-\mc{M}^{\CB}_{22}(z_2)
&\mc{M}^{\at}_{22}(z_3)
&\mc{M}^{\at}_{22}(z_4)&\mc{M}^{\at}_{22}(z_5)&\mc{M}^{\at}_{22}(z_6)\\
-\mc{M}^{\CB}_{12}(z_1)&-\mc{M}^{\CB}_{12}(z_2)&\mc{M}^{\at}_{12}(z_3)
&\mc{M}^{\at}_{12}(z_4)&\mc{M}^{\at}_{12}(z_5)&\mc{M}^{\at}_{12}(z_6)\\
-z_1\mc{M}^{\CB}_{22}(z_1)&-z_2\mc{M}^{\CB}_{22}(z_2)&z_3^{-1}\mc{M}^{\at}_{22}(z_3)
&z_4^{-1}\mc{M}^{\at}_{22}(z_4)&z_5^{-1}\mc{M}^{\at}_{22}(z_5)&z_6^{-1}
\mc{M}^{\at}_{22}(z_6)\\
-z_1\mc{M}^{\CB}_{12}(z_1)&-z_2\mc{M}^{\CB}_{12}(z_2)&z_3^{-1}
\mc{M}^{\at}_{12}(z_3)
&z_4^{-1}\mc{M}^{\at}_{12}(z_4)&z_5^{-1}\mc{M}^{\at}_{12}(z_5)&z_6^{-1}
\mc{M}^{\at}_{12}(z_6)\\
-z_1^{-1}\mc{M}^{\CB}_{22}(z_1)&-z_2^{-1}\mc{M}^{\CB}_{22}(z_2)&z_3
\mc{M}^{\at}_{22}(z_3)
&z_4\mc{M}^{\at}_{22}(z_4)&z_5\mc{M}^{\at}_{22}(z_5)&z_6\mc{M}^{\at}_{22}(z_6)\\
-z_1^{-1}\mc{M}^{\CB}_{12}(z_1)&-z_2^{-1}\mc{M}^{\CB}_{12}(z_2)
&z_3\mc{M}^{\at}_{12}(z_3)
&z_4\mc{M}^{\at}_{12}(z_4)&z_5\mc{M}^{\at}_{12}(z_5)
&z_6\mc{M}^{\at}_{12}(z_6)\end{pmatrix}.
\]
%Numerical computation shows that
%\[
%\min_{\zeta,\, \zeta = \exp(\imath\theta)}\abs{\det A(\zeta)}=0.0069.
%\]
Let $\zeta=\exp(\imath\theta)$, and we take $\zeta_i=\exp(\imath\theta_i)$ with
$\theta_i=2i\pi/M$ for $i=1,\cdots,M$ with $M=1000$. Using
the high precision toolbox in Matlab, we obtain
\[
\min_{1\le i\le M}\abs{\det A(\zeta_i)}=0.00697689943572617629110144\\
4178785040926929.
\]
The profile for $\det A(\zeta)$ may be found in the left panel of
Fig.~\ref{Fig:det}, which clearly shows that $\abs{\det A(\zeta)}$ is
symmetric with respect to $\pi$ and is increasing over $(0,\pi)$. The
symmetry of $\det A(\zeta)$ can be proven by the distribution of the
roots. The right panel of Fig.~\ref{Fig:det} shows the finite
difference approximation
\[
D\abs{\det A(\zeta_j)}{:}=M\Lr{\abs{\det A(\zeta_{j+1})}- \abs{\det
    A(\zeta_{j})}}
\]
of the derivative of $\det A(\zeta)$. It is observed that $\det A$ is
strictly increasing, which indicates that $\det A(\zeta)\not=0$ if
$\zeta\not=0$ as $\det A(0)=0$. Hence no mode I eigenfunction exists.
\begin{figure}
  \centering
  \includegraphics[width=2.5in]{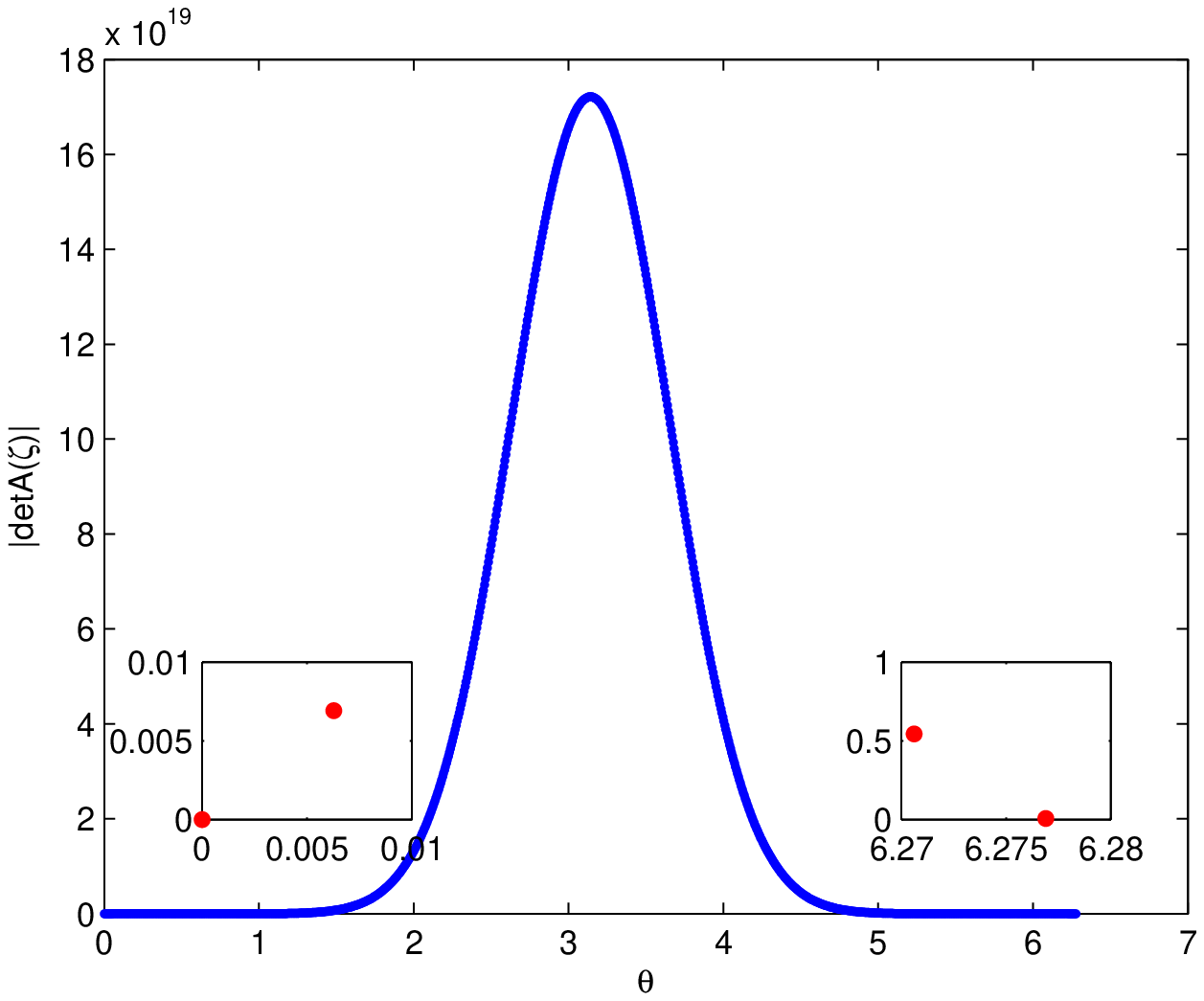}
  \includegraphics[width=2.5in]{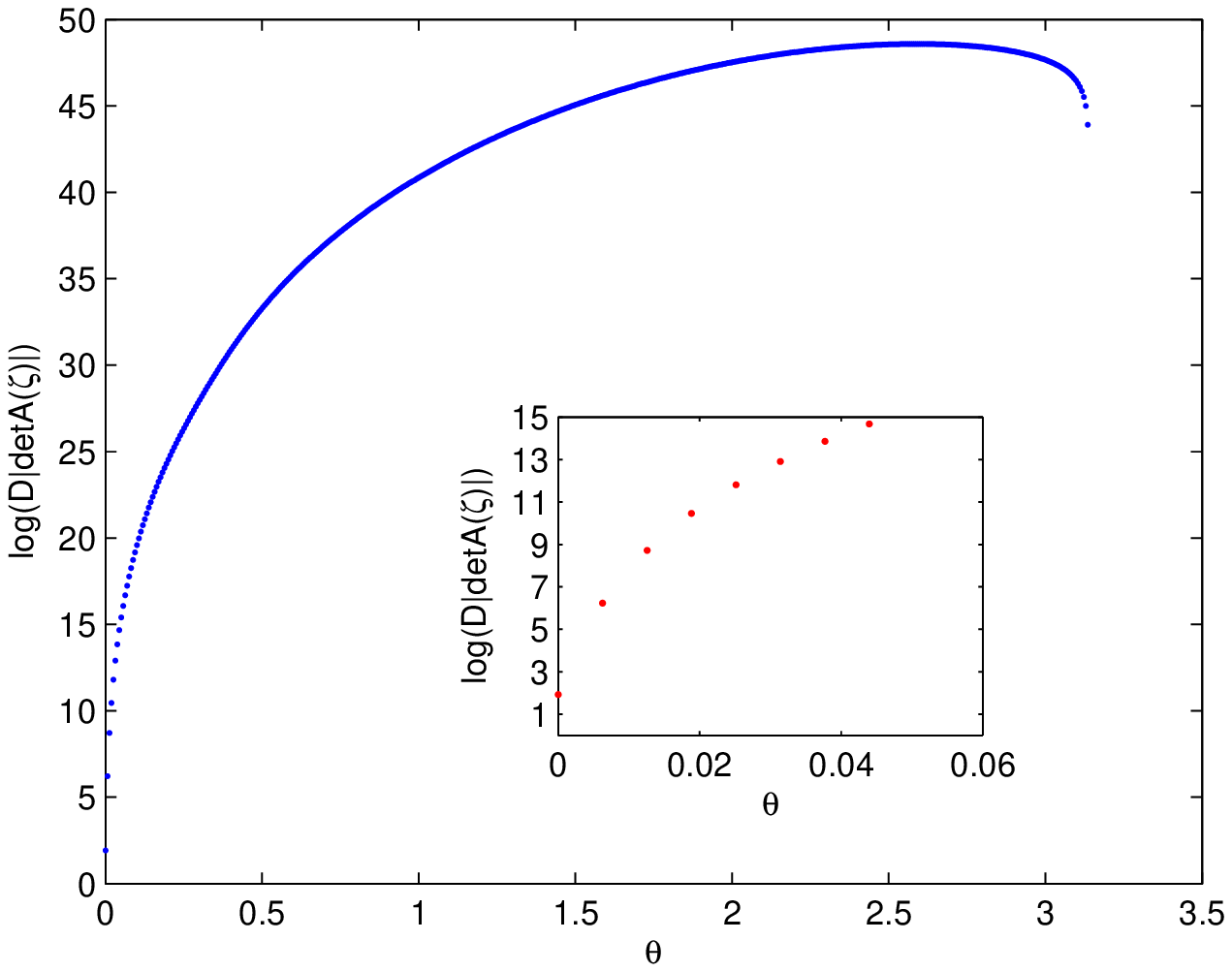}
  \caption{\small (left) $\abs{\det A(\zeta)}$ with $M =
    1000$; (right) $\log D\abs{\det A(\zeta_i)}$. The insets zoom in
    near the end points. }
\label{Fig:det}
\end{figure}

For mode II, by definition, we have
%\[
%\mc{H}_{CB}(\partial_x,\partial_y)=(\ka_2+9\ka_4)\begin{pmatrix}
%\dfrac98\partial_{xx}+\dfrac38\partial_{yy}&\dfrac34\partial_{xy}\\[.1in]
%\dfrac34\partial_{xy}&\dfrac38\partial_{xx}+\dfrac98\partial_{yy}\end{pmatrix}.%\]
%The symbol of $\mc{H}_{\text{CB}}$ is
\[
\wt{\mc{H}}_{\text{CB}}(\partial_x,\theta)(w_1,w_2)^{\TT}=\dfrac{2(\ka_2+9\ka_4)}
{\veps^2}
\begin{pmatrix}
\tfrac38(3\partial_x^2-\theta^2)&\tfrac34\imath\theta\partial_x\\[.1in] \tfrac34\imath\theta\partial_x&\tfrac38(\partial_x^2-3\theta^2)
\end{pmatrix}\begin{pmatrix}
w_1\\
w_2
\end{pmatrix}=\begin{pmatrix}
0\\
0
\end{pmatrix}.
\]
% It is clear the only solution satisfying $w_1(x),w_2(x)\to 0$ as
% $\abs{x}\to\infty$ is the trivial solution $w_1=w_2\equiv 0$.
The nonexistence of mode II eigenfunction is equivalent to the fact
that the the complementing condition for elliptic PDE
\cite{AgmonDouglisNirenberg:1959} is satisfied for the Cauchy-Born
problem. For this linearized elasticity problem with boundary
condition $\sum_{j=1}^3B_{kj}z_j(x)=0$ for $k=1,2,3$, the complementing condition
is valid by~\cite{Thompson:1969} and by explicit calculation
\begin{equation*}
  \ka_2+9\ka_4=h(1)+9h(\sqrt3) \geq 60 K.
\end{equation*}
Hence mode II eigenfunction does not exist.  For mode III, we have
$\zeta\to 1$ as $\theta\to 0$, and $z_1,z_2\to 1$.  The solution takes
the form
\[
\lim_{\theta\to 0}z(x_i,\zeta)=\sum_{k=3}^6c_k(1)\mc{M}_2^{\at}(z_k,1)z_k^i
\]
with $z_3=-0.0042,z_4=0.0293,z_5=0.8945 - 0.0969\I$ and $z_6=\wb{z}_5$. Substituting the above expression into the last two boundary conditions, we obtain
\[\scriptsize
\begin{pmatrix}z_3^{-1}\mc{M}^{\at}_{22}(z_3,1)&z_4^{-1}\mc{M}^{\at}_{22}(z_4,1)&
z_5^{-1}\mc{M}^{\at}_{22}(z_5,1)&z_6^{-1}\mc{M}^{\at}_{22}(z_6,1)\\
z_3^{-1}\mc{M}^{\at}_{12}(z_3,1)&z_4^{-1}\mc{M}^{\at}_{12}(z_4,1)
&z_5^{-1}\mc{M}^{\at}_{12}(z_5,1)&z_6^{-1}\mc{M}^{\at}_{12}(z_6,1)\\
z_3\mc{M}^{\at}_{22}(z_3,1)&z_4\mc{M}^{\at}_{22}(z_4,1)
&z_5\mc{M}^{\at}_{22}(z_5,1)&z_6\mc{M}^{\at}_{22}(z_6,1)\\
z_3\mc{M}^{\at}_{12}(z_3,1)&z_4\mc{M}^{\at}_{12}(z_4,1)
&z_5\mc{M}^{\at}_{12}(z_5,1)&z_6\mc{M}^{\at}_{12}(z_6,1)\end{pmatrix}
\begin{pmatrix}c_3(1)\\c_4(1)\\c_5(1)\\c_6(1)\end{pmatrix}
=\begin{pmatrix}0\\0\\0\\0\end{pmatrix},
\]
which yields $c_i(1)=0$ for $i=3,\cdots,6$. This concludes that there does not exist mode III eigenfunction. Therefore, this scheme is stable.

\section{Conclusion}\label{sec:conclusion}
We have identified stability conditions, especially stability
conditions at the interface, for atomistic-to-continuum hybrid methods
with sharp interface. Under these stability conditions, we establish
convergence of the hybrid scheme. Though we only consider the flat
interface, the analysis can be extended to smooth interface
between the atomistic and continuum regions. In that case, we need to
check the interface stability condition
Assumption~\ref{assump:complement} for interface with different angles.

For the example of atomistic-to-continuum coupling method for a
triangular lattice considered here, the stability can be checked by
hand or with some help of numerical computation. For many other more
complicated methods, this might not be easily done. One possible
direction is to use numerical methods and symbolic computations to check stability
conditions, in analogy to checking GKS conditions for example as
in~\cites{Thune:1986}. This is an interesting future research
direction.

The result in this paper does not apply to transition interface
between atomistic and continuum regions that involves
corners. The coarsening in the continuum region is also not taken into account.
Extension of the results to hybrid schemes with transition
interface with corners and with coarsening in the continuum region would be very interesting.
\bibliographystyle{amsxport}
\bibliography{quasicont}
\end{document}